\theoremstyle{plain}
\newcommand{\N}{\mathbb{N}}
\newcommand{\R}{\mathbb{R}}
\newcommand{\vp}{\varepsilon}
\newtheorem{thm}{Theorem}
\newtheorem*{thmA}{Theorem A}
\newtheorem*{thmC}{Theorem C}
\newtheorem{lem}[thm]{Lemma}
\newtheorem{prop}[thm]{Proposition}
\newtheorem*{propB}{Proposition B}
\theoremstyle{definition}
\newtheorem*{defn}{Definition}          
\newtheorem{prob}[thm]{Problem}
\newcommand{\supp}{{\rm supp}}
\newcommand{\coo}{c_{00}}
\newcommand{\kleq}{\!\leq\!}
\newcommand{\kin}{\!\in\!}
\def\hangbox to #1 #2{\vskip1pt\hangindent #1\noindent \hbox to #1{#2}$\!\!$}
\begin{document}

\title{Greedy bases for Besov spaces}

\author {S. J. Dilworth}\address{Department of Mathematics \\
University of South Carolina\\ Columbia, SC 29208, USA}
\email{dilworth@math.sc.edu}
\author{D. Freeman}\address{Department of Mathematics \\  Texas A\&M University\\
College Station, TX 77843, USA}\email{freeman@math.tamu.edu}
\author{E. Odell}
\address{Department of Mathematics \\
The University of Texas\\1 University Station C1200\\
Austin, TX 78712,  USA} \email{odell@math.utexas.edu}
\author{Th. Schlumprecht}
\address{Department of Mathematics, Texas A\&M University\\
College Station, TX 77843, USA}
\email{thomas.schlumprecht@math.tamu.edu}
\renewcommand{\subjclassname}{\textup{2000} Mathematics Subject
Classification}
\thanks{\textit{2000 Mathematics Subject Classification}: Primary 46B15, Secondary 41A65.} 
\thanks{The research of
the  first, third, and fourth  authors was supported by   National Science Foundation grants 
DMS 0701552,
DMS 0700126, and
DMS 0856148, respectively.
 The second author 
was supported by   grant  N000140811113 of the Office of Naval Research.
The first and third authors were  also supported by the Workshop in
Analysis and Probability at Texas A\&M University in  2009. }
\keywords{Greedy bases; Besov spaces} 

\begin{abstract} We prove that
the Banach space $(\oplus_{n=1}^\infty \ell_p^n)_{\ell_q}$, which is isomorphic to certain Besov spaces,    has a
greedy basis whenever $1\leq p \leq\infty$ and $1<q<\infty$.
Furthermore, the Banach spaces $(\oplus_{n=1}^\infty \ell_p^n)_{\ell_1}$,  with $1<p\le \infty$, and
$(\oplus_{n=1}^\infty \ell_p^n)_{c_0}$, with $1\le p<\infty$  do not have a greedy bases.
We prove as well that the space $(\oplus_{n=1}^\infty \ell_p^n)_{\ell_q}$ has a 1-greedy basis
if and only if $1\leq p=q\le \infty$.
\end{abstract}

\maketitle

\allowdisplaybreaks
\section{Introduction}\label{S:1}
Let $X$ be a  a Banach space and let $(x_i)$ be a Schauder basis
for $X$ with biorthogonal sequence $(x_i^*)$. For $x \in X$ and
$n\ge1$, the error in the best $n$-term approximation to $x$
(using $(x_i)$) is given by
$$\sigma_n(x) := \inf \Big\{\Big\|x - \sum_{i \in A} a_i x_i\Big\| \colon
(a_i) \subset \mathbb{R}, |A| \le n\Big\}.$$
 Let $A_n(x)\subset
\mathbb{N}$ be the indices corresponding to a choice of   $n$   largest coefficients of
$x$ in absolute value, i.e. $A_n(x)$ satisfies
$$ \min \big\{|e_i^*(x)| \colon i \in A_n(x)\big\} \ge \max  \big\{
|e_i^*(x)| \colon i \in \mathbb{N} \setminus A_n(x)\big\}.$$ 
Then
$G_n(x) := \sum_{i \in A_n(x)} x^*_i(x) x_i$ is called an $n^{th}$
\textit{greedy approximant} to $x$. We say that  $(x_i)$ is \textit{greedy} with
constant $C$ if
$$ \|x - G_n(x)\| \le C \sigma_n(x) \qquad(x \in X, n\ge1).$$
If $C=1$ then $(x_i)$ is said to be $1$-greedy. Temlyakov \cite{T1} proved
that the Haar system for $L_p[0,1]^d$ ($1<p<\infty$, $d\ge1$) is
greedy, which provides
an important theoretical justification for the thresholding
procedure used in data compression.  Subsequently, Konyagin and
Temlyakov \cite{KT} gave a very useful abstract characterization of greedy bases.
To state their result, we recall that $(x_i)$ is \textit{unconditional}
with constant $K$ if, for all choices of signs, we have
$$ \Big\|\sum_{i=1}^\infty \pm x^*_i(x)x_i \Big\| \le K\|x\| \qquad(x \in X).$$ 
We say that $(x_i)$ is \textit{democratic} with constant $\Delta$ if,
for all finite  $A, B \subset \mathbb{N}$ with $|A| = |B|$, we
have
$$\Big\|\sum_{i \in A} x_i\Big\| \le \Delta \Big\|\sum_{i \in B} x_i\Big\|.$$

\begin{thmA}\label{T:1} {\rm  \cite{KT}} Suppose that $(x_i)$ is
unconditional with constant $K$ and democratic with constant
$\Delta$. Then $(x_i)$ is greedy with constant $K + K^3\Delta$.
Conversely, if $(x_i)$ is greedy with constant $C$ then $(x_i)$ is
unconditional with constant $C$ and democratic with constant
$C^2$.
\end{thmA}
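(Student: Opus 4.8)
The plan is to prove the two implications separately.

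\emph{Sufficiency.} Fix $x\in X$ and $n\ge 1$, let $\Lambda=A_n(x)$ be any admissible set of $n$ largest coefficients (so $G_n(x)=P_\Lambda(x)$, where $P_S$ denotes the natural finite-rank coordinate projection, of norm $\le K$ by unconditionality), and put $\alpha=\min\{|x_i^*(x)|:i\in\Lambda\}$. Let $z=\sum_{i\in B}b_ix_i$ be an arbitrary $n$-term vector; padding with zero coefficients we may assume $|B|=n$, and $\supp z\subseteq B$. The observation driving the estimate is the disjoint decomposition
\[
x-G_n(x)=P_{\Lambda^c}(x)=P_{(\Lambda\cup B)^c}(x)+P_{B\setminus\Lambda}(x).
\]
The first summand equals $P_{(\Lambda\cup B)^c}(x-z)$ since $\supp z\subseteq B$, so its norm is $\le K\|x-z\|$. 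For the second summand I run the chain: since $|x_i^*(x)|\le\alpha$ for $i\notin\Lambda$, the lattice estimate (a consequence of $K$-unconditionality, valid for real scalars because $[-1,1]$ is the convex hull of $\{\pm1\}$) gives $\|P_{B\setminus\Lambda}(x)\|\le K\alpha\|\sum_{i\in B\setminus\Lambda}x_i\|$; democracy together with $|B\setminus\Lambda|=|\Lambda\setminus B|$ gives $\le K\alpha\Delta\|\sum_{i\in\Lambda\setminus B}x_i\|$; a second lattice estimate, now using $|x_i^*(x)|\ge\alpha$ on $\Lambda$, gives $\le K^2\Delta\|P_{\Lambda\setminus B}(x)\|$; and $P_{\Lambda\setminus B}(x)=P_{\Lambda\setminus B}(x-z)$ yields $\le K^3\Delta\|x-z\|$. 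Adding the two bounds and taking the infimum over $z$ gives $\|x-G_n(x)\|\le(K+K^3\Delta)\sigma_n(x)$. (If $\alpha=0$ then $\Lambda$ may be taken to contain $\supp x$, so $G_n(x)=x$ and there is nothing to prove; if $\Lambda\setminus B=\emptyset$ the second summand vanishes.)

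\emph{Necessity, unconditionality.} Suppose $(x_i)$ is greedy with constant $C$. Fix a finitely supported $x=\sum_{i\in F}a_ix_i$ (the general case follows from continuity of finite-rank projections and Schauder convergence) and a finite $A\subseteq F$; I show $\|x-P_A(x)\|\le C\|x\|$. We may assume $F\setminus A\neq\emptyset$ and $t:=\max\{|a_i|:i\in F\setminus A\}>0$ (otherwise $x=P_A(x)$). Set $y=t\sum_{i\in A}x_i+\sum_{i\in F\setminus A}a_ix_i$. Every coefficient of $y$ on $A$ has absolute value $t$, while all others are $\le t$, so $A$ is an admissible choice of the $|A|$ largest coefficients of $y$; hence $G_{|A|}(y)=t\sum_{i\in A}x_i$ is a legitimate greedy approximant and $y-G_{|A|}(y)=x-P_A(x)$. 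On the other hand the $|A|$-term vector $z=\sum_{i\in A}(t-a_i)x_i$ satisfies $y-z=x$, so $\sigma_{|A|}(y)\le\|x\|$, and the greedy inequality gives $\|x-P_A(x)\|\le C\|x\|$. (Ties force us to work with an arbitrary admissible greedy set, so it is essential here that the greedy inequality is assumed for \emph{every} such choice.)

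\emph{Necessity, democracy.} First take disjoint finite $A,B$ with $|A|=|B|=n$: for $\delta>0$ the vector $y=(1+\delta)\sum_{i\in B}x_i+\sum_{i\in A}x_i$ has $B$ as its set of $n$ largest coefficients, so $y-G_n(y)=\sum_{i\in A}x_i$, while the $n$-term approximant $\sum_{i\in A}x_i$ gives $\sigma_n(y)\le(1+\delta)\|\sum_{i\in B}x_i\|$; letting $\delta\to0$ yields $\|\sum_{i\in A}x_i\|\le C\|\sum_{i\in B}x_i\|$. For general finite $A,B$ with $|A|=|B|=n$, choose (using that the index set is infinite) a set $D$ of size $n$ disjoint from $A\cup B$ and interpose it: $\|\sum_{i\in A}x_i\|\le C\|\sum_{i\in D}x_i\|\le C^2\|\sum_{i\in B}x_i\|$. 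This gives democracy with constant $C^2$.

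The genuinely non-routine points are: (i) in the first implication, spotting the decomposition of $x-G_n(x)$ above and keeping careful track of which of the three estimates costs a factor of $K$, so that the constant comes out exactly as $K+K^3\Delta$ rather than something larger; and (ii) in the converse, designing the auxiliary ``spiked'' vectors so that on one hand a permitted greedy approximant produces precisely the quantity to be estimated, and on the other hand an explicit competing $n$-term approximant has controlled norm --- together with the (easy but crucial) point that, since the greedy hypothesis applies to every admissible largest-coefficient set, we may select the convenient one when coefficients are tied.
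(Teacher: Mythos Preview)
The paper does not supply its own proof of Theorem~A; it is quoted from \cite{KT} and used as a black box. So there is nothing in the paper to compare your argument against. That said, your proof is correct and is essentially the original Konyagin--Temlyakov argument: the decomposition $x-G_n(x)=P_{(\Lambda\cup B)^c}(x-z)+P_{B\setminus\Lambda}(x)$, the chain of lattice/democracy/lattice estimates costing $K\cdot\Delta\cdot K\cdot K$, and the ``spiked'' auxiliary vectors in the converse are exactly the standard moves.

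One small caveat worth flagging: in the converse you prove that projections $P_{A^c}$ have norm at most $C$, i.e.\ $C$-\emph{suppression} unconditionality. The paper, however, defines ``unconditional with constant $K$'' via sign changes, $\|\sum\pm x_i^*(x)x_i\|\le K\|x\|$. These two notions agree up to a factor of $2$ but not with the same constant, so strictly speaking your argument yields sign-unconditionality with constant $2C$ rather than $C$. This is a discrepancy in the paper's phrasing of Theorem~A relative to \cite{KT} (where the result is stated for suppression unconditionality), not a flaw in your proof; but if you want the exact constant $C$ for the paper's definition you would need an extra sentence, and in fact the sharp constant for sign changes from $C$-greedy is not $C$ in general.
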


Theorem~A was used in \cite{W1, W2} to prove that
$L_p[0,1]$ ($p \ne 2$)  has a greedy basis that is not equivalent
to a subsequence of the Haar basis, and in \cite{DHK} to prove that
$\ell_p$  and $L_p[0,1]$ ($p \ne 2$) have a continuum of mutually
non-equivalent greedy bases.  It was also used in \cite{DKKT} to study
duality for greedy bases, and a similar  theorem was proved in \cite{DKKT} to
characterize the  larger class of \textit{almost greedy} bases (see
also \cite{DKK}).

Some examples of greedy bases are given in \cite{W2}. In most
cases these bases  are  greedy simply because  they are
\textit{symmetric} (e.g. Riesz bases for a Hilbert space, which
are equivalent to the unit vector basis of $\ell_2$, or good
wavelet bases for the Besov spaces $B^{p}_{\alpha,p}(\mathbb{R})$,
which are equivalent to the unit vector basis of $\ell_p$), or
because they are equivalent to the Haar basis (e.g. good wavelet
bases for $L_p(\mathbb{R}^d)$) or to a subsequence of the Haar
basis (e.g. generalized Haar systems \cite{K}).  In \cite{GH}
certain wavelet bases in the  Triebel-Lizorkin spaces $\mathfrak{f}_{p,r}^s$ are
shown to be greedy. In \cite{AW} it is  proved that $1$-symmetric
bases (e.g. the unit vector bases of Orlicz and Lorentz sequence
spaces) are in fact $1$-greedy. On the other hand, there are
examples of spaces with an unconditional basis but no democratic
unconditional basis, and hence no greedy basis, e.g. certain
spaces with a unique unconditional basis up to permutation
\cite{BCLT}, the spaces $\ell_p \oplus \ell_q$ and $\ell_p \oplus
c_0$ for $1 \le p < q < \infty$ \cite{AW},   and the original
Tsirelson space $T^*$ \cite{T2}. Wojtaszczyk \cite{W3} proved that
the $L_p$ spaces ($1<p<\infty$) are the only
rearrangement-invariant function spaces on $[0,1]$ for which the
Haar system  is greedy.

Using Theorem~A we prove that for every $1\leq p \leq\infty$ and $1<q<\infty$
the Banach space $(\oplus_{n=1}^\infty \ell_p^n)_{\ell_q}$ has a
greedy basis.  Furthermore, we
show that  the Banach space $(\oplus_{n=1}^\infty
\ell_p^n)_{\ell_1}$ does not have a greedy basis whenever
$1<p\leq\infty$. This answers a question posed by P. Wojtaszczyk,
who asked when such spaces have a greedy basis.
The problem of
finding a greedy basis for Banach spaces of the form
$(\oplus_{n=1}^\infty \ell_p^n)_{\ell_q}$ is particularly
pertinent from the approximation theoretical standpoint as such
spaces are isomorphic to certain Besov spaces on the circle  \cite{R}. 
 As shown in \cite[Theorem  2]{R}  (see also \cite[page 255]{Pi}), the Besov space $B^{\alpha, m}_{p,q}[0,1]$,
 where $1\le p\le \infty$, $1\le q<\infty$, $m\in\{-1,0,1,2\ldots\}$, and $\alpha\in(1,m+1+1/p)$,
 is isomorphic  to $\ell_1^{m+2}\oplus (\oplus_{n=0}^\infty \ell_p^{2^n})_{\ell_q}$ which is easily seen to be
 isomorphic to $ (\oplus_{n=1}^\infty \ell_p^n)_{\ell_q}$.

The
greedy bases which we construct in Theorem~\ref{T:2} differ from the examples discussed above in that they are neither
subsymmetric  (see \cite[p. 114]{LT} for this notion)
nor equivalent to a subsequence of the Haar basis.

The following result completely characterizes  for which pairs
$(p,q)$  the space
$(\oplus_{n=1}^\infty \ell_p^n)_{\ell_q}$ has a greedy basis.

\begin{thm}\label{T:2} Let $1\le p,q\le \infty$.
\begin{enumerate}
\item[a)] If $1<q<\infty$ then the Banach space
$(\oplus_{n=1}^\infty \ell_p^n)_{\ell_q}$  has a greedy basis.
\item[b)]  The spaces   $(\oplus_{n=1}^\infty \ell_p^n)_{\ell_1}$,
with $1<p\le \infty$, and
  $(\oplus_{n=1}^\infty \ell_p^n)_{c_0}$, with $1\le p< \infty$,  do not have greedy bases.
\end{enumerate}
\end{thm}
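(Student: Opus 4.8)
The plan is to treat the two parts of Theorem~\ref{T:2} by very different methods, since (a) is a construction and (b) is an obstruction.

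\medskip

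\noindent\textbf{Part (a): constructing a greedy basis in $(\oplus \ell_p^n)_{\ell_q}$, $1<q<\infty$.}
By Theorem~A it suffices to produce an unconditional basis that is democratic. The natural unit vector basis of $X:=(\oplus_{n=1}^\infty \ell_p^n)_{\ell_q}$ is $1$-unconditional but badly non-democratic: a flat vector supported on a single $\ell_p^n$ block has norm $\asymp n^{1/p}$, whereas a vector with one coordinate in each of $n$ different blocks has norm $\asymp n^{1/q}$, and for $p\neq q$ these are not comparable. The idea is therefore to \emph{re-block and re-weight}. First I would replace $(\oplus_{n=1}^\infty \ell_p^n)_{\ell_q}$ by the isomorphic space $(\oplus_{k=1}^\infty F_k)_{\ell_q}$ where $F_k$ is itself a direct sum of many copies of $\ell_p^n$'s of a single length $n_k$, the multiplicities and lengths chosen so that $\dim F_k$ grows geometrically (this uses the standard fact, implicit in the Besov identification quoted above, that $(\oplus_n \ell_p^n)_{\ell_q}\cong(\oplus_n \ell_p^{2^n})_{\ell_q}$ and that one may further amalgamate blocks of equal length). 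The point of this arrangement is that inside each $F_k$ one can choose, for \emph{every} cardinality $s\le \dim F_k$, a subset of $s$ coordinates whose indicator vector has $F_k$-norm comparable to $\max\{s^{1/p}\cdot(\text{something}),\dots\}$; more precisely one arranges $F_k = (\oplus_{j} \ell_p^{m_k})_{\ell_q^{r_k}}$ with $m_k, r_k$ both tending to infinity, so that a set of size $s$ can be spread either within few $\ell_p$-components (giving the $\ell_p$ behaviour) or across many (giving the $\ell_q$ behaviour), and by choosing the spreading appropriately the fundamental function becomes \emph{equivalent to a single power} $s^{1/r}$ for a suitable $r$ with $\min(p,q)\le r\le \max(p,q)$ — up to constants independent of $k$. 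One then picks, for each $k$ and each dyadic cardinality, a concrete such ``spread-out'' set, and the desired basis is obtained by taking, in lexicographic block order, an orthogonal-type change of basis on each $F_k$ (e.g. a tensor of the unit vector basis with a Walsh/averaging system) that turns these spread-out sets into genuine sub-collections of basis vectors. Unconditionality with a uniform constant is automatic because the change of basis on each finite-dimensional $F_k$ is a fixed isomorphism and the outer sum is $\ell_q$; democracy then reduces to the estimate that for \emph{any} set $A$ of size $N$, $\|\sum_{i\in A} x_i\|\asymp N^{1/r}$ uniformly, which one checks by splitting $A$ across the blocks $F_k$, using the per-block fundamental-function estimate inside each $F_k$, and then summing in $\ell_q$ with Hölder — the worst case being $A$ concentrated in one block versus $A$ spread one-per-block, both of which the construction was rigged to give $\asymp N^{1/r}$.

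\medskip

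\noindent\textbf{Part (b): no greedy basis when $q\in\{1\}\cup\{``c_0\text{''}\}$ and $p>1$ (resp. $p<\infty$).}
Here I would argue by contradiction using the converse half of Theorem~A: a greedy basis with constant $C$ is $C$-unconditional and $C^2$-democratic, so it is enough to show $Y:=(\oplus_{n=1}^\infty \ell_p^n)_{\ell_1}$ (the $c_0$ case is dual/analogous) has \emph{no} democratic unconditional basis. The strategy is the standard ``incompatible fundamental functions'' obstruction used by Dilworth--Kalton--Kutzarova and in the $\ell_p\oplus\ell_q$ example of \cite{AW}: if $(y_i)$ were such a basis with fundamental function $\varphi(N)=\|\sum_{i\in A}y_i\|$ ($|A|=N$), then on one hand $Y$ contains isometric, $1$-complemented copies of $\ell_1$ (take one coordinate per block), forcing $\varphi(N)\gtrsim N$ along some sequence of ``$\ell_1$-like'' averages; on the other hand $Y$ contains uniformly complemented copies of $\ell_p^n$ for all $n$, and inside these the unconditional basis constant forces $\varphi$ to grow no faster than $N^{1/p}$ along ``$\ell_p$-like'' averages. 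Concretely: an unconditional basic sequence spanning a subspace isomorphic to $\ell_p^n$ has the property that equal-sized flat sums grow like $n^{1/p}$, while an unconditional basic sequence inside a complemented $\ell_1$ grows like $n$; if a \emph{single} basis is democratic, these two growth rates of $\varphi$ must coincide up to a constant, which is impossible for $n\to\infty$ once $p>1$. Making this rigorous is the main obstacle and requires a genuine lemma: one must show that every normalized unconditional basis of $Y$ (not just block bases of the canonical one) must, for large $N$, contain $N$ basis vectors whose flat sum is $\asymp N$ and also $N$ others whose flat sum is $\lesssim N^{1/p}$. I would obtain this from a ``extracting a good block basis / finite-representability'' argument: since $\ell_1$ is finitely representable in $Y$ in a $1$-complemented way on arbitrarily small ``tails'', a gliding-hump/perturbation argument produces, inside any normalized unconditional basic sequence, long blocks that are $(1+\vp)$-equivalent to the $\ell_1^m$ basis and $(1+\vp)$-complemented; symmetrically, the $\ell_p^m$ structure is trapped on each coordinate block $\ell_p^{n}$ with $n\ge m$, and another gliding hump places $m$ of the abstract basis vectors essentially inside one such $\ell_p^n$-block, giving an $\ell_p^m$-flat-sum estimate. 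Combining the two estimates with the democracy inequality $\varphi(m)\le \Delta\,\varphi(m)$ applied across the two kinds of sets yields $m \lesssim \Delta\, m^{1/p}$ for all $m$, the contradiction. The $c_0$-case is entirely parallel, replacing ``contains complemented $\ell_1$'' by ``contains complemented $c_0$'' and the roles of lower/upper estimates, and needs $p<\infty$ so that the $\ell_p^n$ blocks are genuinely not $c_0$-like.

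\medskip

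I expect the serious technical work to be entirely in part (b)'s lemma — upgrading ``block bases of the canonical basis behave badly'' to ``\emph{every} unconditional basis behaves badly'' — since part (a) is a matter of bookkeeping once the re-blocking that equalizes the fundamental function to a pure power $N^{1/r}$ is set up correctly.
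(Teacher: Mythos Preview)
Your sketch for part (a) has a real gap. You propose to take an ``orthogonal-type change of basis (e.g.\ a Walsh/averaging system)'' on each finite block $F_k$ and claim that ``unconditionality with a uniform constant is automatic because the change of basis on each finite-dimensional $F_k$ is a fixed isomorphism.'' That is false: an isomorphism need not preserve unconditional constants, and in fact the Walsh system in $\ell_p^n$ for $p\ne 2$ has unconditional constant growing with $n$. The paper avoids this entirely by never changing basis. Its building blocks are \emph{disjointly supported} averages $x_{(i,j)}=n_i^{-1/q}\sum_{s=1}^{n_i} e_{(i,s+(j-1)n_i)}$ taken along the outer $\ell_q$-direction, so $1$-unconditionality is immediate and the fundamental function is forced to be $\asymp N^{1/q}$ (not an intermediate $N^{1/r}$ as you suggest). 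The price is that the span of the $x_{(i,j)}$ is a proper subspace, so one must prove uniform complementation of that span in $\ell_q(\ell_p^N)$ and of $\ell_p^N$ inside it, then invoke Pe\l czy\'nski decomposition. The complementation bound is the nontrivial analytic ingredient you are missing: the paper obtains it by interpolation, reducing to $p=\infty$ and then dominating the averaging projection by a discretized Hardy--Littlewood maximal operator on $\ell_q$.

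For part (b) your outline is the right caricature but the step you flag as ``the main obstacle'' is exactly where your argument is incomplete. Finding, inside an \emph{arbitrary} unconditional basis of $(\oplus\ell_p^n)_{\ell_1}$, many basis vectors whose flat sum is $\lesssim m^{1/p}$ cannot be done by a gliding hump: there is no reason individual basis vectors should sit essentially inside a single $\ell_p^n$ block. The paper passes to the biorthogonal functionals, which live in $(\oplus\ell_q^n)_{\ell_\infty}\subset c_0(\ell_q)$, and invokes the Bourgain--Casazza--Lindenstrauss--Tzafriri structure theorem (their Proposition~2.1) for unconditional bases of complemented subspaces of a $c_0$-sum of a lattice with nontrivial type/cotype; this yields a partition $\{\tau_s\}$ on which the dual basis satisfies uniform $\ell_{\underline q}/\ell_{\bar q}$ estimates, and the contradiction with democracy follows by H\"older. (The endpoint cases $p=2,\infty$ are handled directly by the BCLT uniqueness-of-unconditional-basis results.) Finally, your ``entirely parallel'' treatment of the $c_0$ case is overkill: since any normalized basis of $(\oplus\ell_p^n)_{c_0}$ has a subsequence equivalent to the $c_0$ basis, democracy forces $\|\sum_{i\in A}x_i\|\le C$ for all finite $A$, and then unconditionality makes the whole basis equivalent to the $c_0$ basis, contradicting $p<\infty$.
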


The following result yields that only in the trivial case that
$p=q$   does $(\oplus_{n=1}^\infty \ell_p^n)_{\ell_q}$  have a
$1$-greedy basis.
\begin{thm}\label{T:2a}
Let $1\leq p\le \infty$ and let $(E_n)_{n=1}^\infty$ be a sequence
of finite dimensional  Banach spaces. If $(x_i)_{i=1}^\infty$ is a
normalized 1-greedy basis for the space $(\oplus_{n=1}^\infty
E_n)_{\ell_p}$ then $(x_i)_{i=1}^\infty$ is 1-equivalent to the
standard unit vector basis for $\ell_p$ (as usual, if $p=\infty$
we consider the $c_0$-sum).
\end{thm}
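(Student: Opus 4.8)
The plan is to prove directly that a normalized $1$-greedy basis $(x_i)$ of $X:=(\oplus_{n=1}^\infty E_n)_{\ell_p}$ satisfies $\big\|\sum_i a_i x_i\big\|=\big(\sum_i|a_i|^p\big)^{1/p}$ for all $(a_i)\in c_{00}$ (with the $\sup$-norm when $p=\infty$), which is exactly the assertion. By Theorem~A, $(x_i)$ is $1$-unconditional; hence it suffices to treat vectors $x=\sum_{i\in F}a_ix_i$ with $F$ finite and each $a_i>0$, since $1$-unconditionality then removes the sign restriction and both sides are homogeneous and continuous in the coefficients.

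The heart of the argument is a peeling lemma: \emph{if $x=\sum_{i\in F}a_ix_i$ with all $a_i>0$ and $j\in F$ is an index with $a_j=\max_{i\in F}a_i$, then, setting $w:=x-a_jx_j$, one has $\|x\|=(a_j^{p}+\|w\|^p)^{1/p}$} (respectively $\max(a_j,\|w\|)$ when $p=\infty$). Granting this, list the coefficients of $x$ with multiplicity as $a_{(1)}\ge a_{(2)}\ge\cdots\ge a_{(n)}>0$ and apply the lemma repeatedly, each time removing a coordinate of largest remaining value; after $n$ steps we reach $\|x\|^p=\sum_{k=1}^n a_{(k)}^{p}=\sum_{i\in F}a_i^{p}$, as required, and by density this shows $(x_i)$ is $1$-equivalent to the $\ell_p$ (or $c_0$) unit vector basis.

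To prove the lemma, fix $\varepsilon>0$. Let $Q_N$ be the norm-one projection of $X$ onto $E_1\oplus\cdots\oplus E_N$; then $I-Q_N$ has range $\overline{\oplus_{n>N}E_n}$ and $\|y\|^p=\|Q_Ny\|^p+\|(I-Q_N)y\|^p$ for all $y$ (the maximum of the two when $p=\infty$). Since $\|(I-Q_N)w\|\to0$ as $N\to\infty$, fix $N$ with $\|(I-Q_N)w\|<\varepsilon$, and then choose $j_0\notin F$ with $\|Q_Nx_{j_0}\|<\varepsilon$. Put $\tilde x:=x+a_jx_{j_0}$. As $a_j$ is a maximal coefficient of $x$, it is one of $\tilde x$ too, attained at both $j$ and $j_0$; hence each of $a_jx_j$ and $a_jx_{j_0}$ is a legitimate first greedy approximant of $\tilde x$, and, being also admissible in $\sigma_1(\tilde x)$, $1$-greediness forces
\[
\|\tilde x-a_jx_j\|=\sigma_1(\tilde x)=\|\tilde x-a_jx_{j_0}\|.
\]
But $\tilde x-a_jx_{j_0}=x$ and $\tilde x-a_jx_j=w+a_jx_{j_0}$, so $\|x\|=\|w+a_jx_{j_0}\|$. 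Splitting $w+a_jx_{j_0}$ into its $Q_N$-part and $(I-Q_N)$-part and using $\|(I-Q_N)w\|<\varepsilon$, $\|Q_Nx_{j_0}\|<\varepsilon$ and $\big|\|(I-Q_N)x_{j_0}\|-1\big|\le\varepsilon$, one gets $\big|\,\|x\|-(a_j^{p}+\|w\|^p)^{1/p}\,\big|\le C_x\varepsilon$; letting $\varepsilon\to0$ proves the lemma (the case $p=\infty$ is identical with $\max$ replacing the $\ell_p$-combination).

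The only use of the ambient space is the choice of a \emph{basis} vector $x_{j_0}$ nearly supported in a tail block $\oplus_{n>N}E_n$; this is the step I expect to be the real obstacle. For $1<p\le\infty$ it is automatic: then $X$ is reflexive (if $p<\infty$) or has the separable dual $(\oplus E_n^*)_{\ell_1}$ (if $p=\infty$), so $(x_i)$ is shrinking, hence weakly null, hence $\|Q_Nx_i\|\to0$ as $i\to\infty$ (the range of $Q_N$ being finite dimensional), and suitable $j_0$ abound. For $p=1$ the basis need not be shrinking, and a separate argument is needed to produce such vectors; I would extract it from the $1$-democracy of $(x_i)$ (from Theorem~A) together with the $\ell_1$-structure of $X$, e.g.\ by first pinning down the fundamental function $\varphi(n)=\|\sum_{i\le n}x_i\|$ and showing it must equal $n$. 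Note that, apart from this point, only $1$-unconditionality and the $n=1$ case of the greedy inequality are used — democracy and the full strength of greediness are not needed for the main estimate.
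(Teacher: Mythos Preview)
Your argument for $1<p\le\infty$ is correct and follows the same line as the paper's: peel off the largest coefficient by replacing its basis vector with one that is far out in the $\ell_p$-sum, then use weak nullity of $(x_i)$ and the $\ell_p$-structure of $X$ to split the norm. The paper invokes property~(A) (via the Albiac--Wojtaszczyk characterization, Theorem~C) to obtain $\|x\|=\|w+a_Nx_j\|$ for every $j\notin F$ directly, whereas you derive the same identity from the definition of $1$-greedy applied to $\tilde x=x+a_jx_{j_0}$; this is a pleasant self-contained variant, but not a genuinely different idea.

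The $p=1$ case is a real gap in your proposal. Your suggested route through the fundamental function does not lead anywhere obvious: in an $\ell_1$-sum the equality $\big\|\sum_{i\in A}x_i\big\|=|A|$ is easy to arrange without the $x_i$ being asymptotically supported in the tail, so establishing $\varphi(n)=n$ by itself would not produce the vectors $x_{j_0}$ with $\|Q_Nx_{j_0}\|<\varepsilon$ that you need. The paper closes the gap by a different device: regard $X=(\oplus E_n)_{\ell_1}$ as the dual of $(\oplus E_n^*)_{c_0}$ and show that $(x_i)$ must be $w^*$-null. If not, a subsequence $(x_{k_i})$ converges $w^*$ to some $x\neq0$; then $(x_{k_i}-x)$ is $w^*$-null, and since the $\ell_1$-norm is additive across $w^*$-null perturbations one computes
\[
\lim_n\lim_i\|x_{k_n}-x_{k_i}\|=2\lim_i\|x_{k_i}-x\|,\qquad
\lim_n\lim_i\|x_{k_n}+x_{k_i}\|=2\|x\|+2\lim_i\|x_{k_i}-x\|.
\]
But $1$-unconditionality forces these two double limits to coincide, whence $\|x\|=0$, a contradiction. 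Once $(x_i)$ is known to be $w^*$-null, $\|Q_Nx_j\|\to0$ (the range of $Q_N$ is finite-dimensional), and your peeling lemma then finishes the proof exactly as in the reflexive case.
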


As the cases $\ell_p\oplus \ell_q$ and $(\oplus_{n=1}^\infty \ell_p^n)_{\ell_q}$ are settled 
 the following spaces might be interesting to consider.
\begin{prob}  Assume $1<p\not= q<\infty$. Does $\ell_q(\ell_p)=(\oplus \ell_p)_{\ell_q}$ have a greedy basis?
\end{prob}
We thank P. Wojtaszczyk for  his comments on a preliminary version of  our paper  and for  drawing \cite{R}  to our notice.

\section{Proof of  Theorems \ref{T:2} and \ref{T:2a} }\label{S:2}

Part (a) of  Theorem \ref{T:2}  will   follow  easily from the following Lemma, whose proof will require some work.

\begin{lem}\label{L:1}
Let $1\leq p \leq\infty$ and $1<q<\infty$ and let $\vp>0$.
There is a constant $1\leq K<\infty$ such that for all $N\in\N$
there exist $M=M_N$ and  a finite normalized  sequence
$(x_i)_{i=1}^M\subset \ell_q(\ell_p^N)$ such that
\begin{enumerate}
\item[a)]  $(x_i)_{i=1}^M$ is  $1$-unconditional,
\item[b)] $(1-\vp)|A|\leq\Big\|\sum_{i\in A}x_i\Big\|^q\leq(1+\vp)|A|$\quad for all $A\subset\{1,...,M\}$,
\item[c)] the span of $(x_i)_{i=1}^M$ is $K$-complemented in $\ell_q(\ell_p^N)$, and
\item[d)] $\ell_p^N$ is isometric to a $K$-complemented subspace of
the span of $(x_i)_{i=1}^M$.
\end{enumerate}
\end{lem}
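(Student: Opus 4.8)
The plan is to build $(x_i)_{i=1}^M$ explicitly as a block basis of $\ell_q(\ell_p^N)$ living on a large but finite number of coordinate blocks. The key idea is that a democratic-with-near-isometry-constant family must behave, upon summing, like the unit vector basis of $\ell_q$ (because $q$ is where we measure the outer norm), yet individual vectors must be norm-one in $\ell_p^N$. So I would take each $x_i$ to be supported on a single $\ell_p^N$-block, but arrange many of them on the \emph{same} block so that within one block the vectors $x_i$ look like a normalized version of the $\ell_p^N$ basis while across blocks they are disjointly supported and hence add in $\ell_q$. Concretely, fix a large integer $m$, use blocks indexed by $j=1,\dots,m$, and on block $j$ place $N$ unit vectors $x_i$ equal (up to a scalar) to the standard basis of that copy of $\ell_p^N$. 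Then $M=mN$, condition (a) is immediate since coordinate vectors are $1$-unconditional, and (d) holds trivially — any single block gives an isometric copy of $\ell_p^N$ that is norm-one complemented by the natural coordinate projection, so $K$ can absorb this. The content is in (b) and (c).

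For (b): if $A$ meets block $j$ in a subset of size $a_j$, then $\|\sum_{i\in A}x_i\|^q = \sum_j \|\sum_{i\in A\cap(\text{block }j)} x_i\|_{\ell_p^N}^q = \sum_j a_j^{\,q/p}$ if all $x_i$ have $\ell_p^N$-norm $1$ and we normalize so each block's basis vectors are the standard ones. This is \emph{not} $|A|=\sum_j a_j$ unless $p=q$, so a naive choice fails. The fix: do not use disjoint coordinate vectors inside a block; instead spread each $x_i$ thinly across \emph{many} blocks so that summing a set $A$ produces, on each block, a sum of nearly-disjoint small vectors whose $\ell_p^N$-norm is close to $|A\cap\text{block}|^{1/p}$ times a constant — no, better: choose each $x_i$ to be supported on $r$ distinct blocks (a combinatorial design / random choice), each coordinate of size $r^{-1/q}$ within its block's $\ell_p^N$, so that $\|x_i\|=1$ while $\|\sum_{i\in A}x_i\|^q \approx \sum_{j} (\text{number of }i\in A\text{ hitting block }j)\cdot r^{-q/q}\cdot(\text{interior }\ell_p\text{ factor})$. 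The point of the design is that for $|A|$ not too large relative to $m$, each block is hit by roughly $|A|r/m$ elements of $A$ and these land on distinct coordinates, so the interior $\ell_p^N$-norm is computed on disjoint unit-scaled pieces. Tuning $r$, the coordinate sizes, and $m=m_N$ large enough (this is where the finite-dimensional restriction to $\ell_p^N$ forces $r\le N$ and dictates how large $M$ must be) yields $\|\sum_{i\in A}x_i\|^q \in [(1-\vp)|A|,(1+\vp)|A|]$ uniformly in $A$.

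For (c), complementation: the span of $(x_i)$ sits inside the finite $\ell_q$-sum of the $m$ blocks of $\ell_p^N$; I would exhibit a bounded projection from $(\oplus_{j=1}^m \ell_p^N)_{\ell_q}$ onto $\operatorname{span}(x_i)$ using the biorthogonal functionals $x_i^*$ extended by Hahn--Banach, with norm control coming from (a)+(b) via Theorem~A (the family is $1$-unconditional and $\Delta$-democratic with $\Delta\to1$, hence greedy, hence a uniformly bounded basis with uniformly bounded biorthogonal system). Since this finite $\ell_q$-sum is itself $1$-complemented in $\ell_q(\ell_p^N)$ by the natural coordinate projection onto the first $m$ blocks, composing gives the desired $K$-complementation with $K$ independent of $N$.

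The main obstacle I anticipate is the combinatorial/probabilistic construction in (b): getting the spreading parameter $r$, the within-block scaling, and $M=M_N$ to fit together so that for \emph{every} subset $A\subseteq\{1,\dots,M\}$ — including very large $A$, where blocks get hit many times and the interior $\ell_p^N$ sums are no longer on disjoint coordinates — the two-sided estimate still holds to within $\vp$. Handling large $A$ presumably requires a separate, coarser argument (e.g.\ convexity/averaging bounds showing the $\ell_p^N$-interior norms cannot deviate much once every coordinate is saturated), and reconciling the small-$A$ regime with the large-$A$ regime is the delicate point. The constant $K$ and the admissible range of $\vp$ must be shown to depend only on $p,q$, not on $N$, which constrains the construction to be ``self-similar'' as $N$ grows.
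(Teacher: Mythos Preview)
Your proposal has two genuine gaps. The minor one is that your construction for (b) is only a sketch: you correctly identify that a naive block-diagonal placement fails unless $p=q$, and you gesture at a ``spreading'' design, but you explicitly leave the reconciliation of small-$A$ and large-$A$ regimes as an unresolved obstacle. That is precisely the content of the lemma, so as written nothing is proved. The paper's construction is in fact quite different from what you suggest: each $x_{(i,j)}$ is supported on a \emph{single} coordinate $i$ of $\ell_p^N$ but is constant across a block of $n_i$ consecutive $\ell_q$-summands, with the block lengths $n_1\ll n_2\ll\cdots\ll n_N$ growing rapidly according to carefully chosen parameters; the democracy estimate (b) is then proved by induction on the range of the first index, with a case split according to whether $|A|$ is large or small relative to the next block length.

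The more serious gap is your argument for (c). You claim that $1$-unconditionality and near-$1$-democracy of $(x_i)$, via Theorem~A, yield a uniformly bounded projection from $\ell_q(\ell_p^N)$ onto $\operatorname{span}(x_i)$ through Hahn--Banach extensions of the biorthogonals. This is false: internal basis constants say nothing about the projection constant of the span inside an ambient space. (For a clean counterexample, the standard basis of $\ell_2^n$ sitting inside $\ell_\infty^n$ is $1$-unconditional and $1$-democratic, yet the projection constant grows like $\sqrt{n}$.) The paper handles (c) by an entirely different route: the natural averaging projection onto $\operatorname{span}(x_{(i,j)})$ is bounded on $\ell_q(\ell_q^N)$ trivially (symmetric basis), and then vector-valued Riesz--Thorin interpolation reduces the general case to $p=\infty$, where the bound follows from the $L_q$-boundedness of the Hardy--Littlewood maximal operator. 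This analytic input is essential and is completely absent from your outline.
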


Using the lemma, we  give a quick proof of the first part of  Theorem \ref{T:2}.
\begin{proof}[Proof of  Theorem \ref{T:2} (a)] Let  $1\le p \le \infty$ and $1<q<\infty$.
It will be more convenient for us to work with the space  $X:=(\oplus_{N=1}^\infty\ell_q(\ell_p^N))_{\ell_q}$  instead
of    $(\oplus_{n=1}^\infty \ell_p^n)_{\ell_q}$. That these spaces are isomorphic follows from  {\em  Pe\l czy\'nski's Decomposition
Method} \cite{Pe}, which says that if two Banach spaces  are complementably embedded in each other, and one of them is isomorphic
to the (countably infinite) $\ell_r$-sum, $1\le r<\infty$,  or $c_0$-sum of itself, then  they are isomorphic. It is easy to observe that $X$ and  $(\oplus_{n=1}^\infty \ell_p^n)_{\ell_q}$
are 1-complemented in each other, and that $X$ is isometric to its $\ell_q$ sum.

We let $\vp>0$ and choose, for each $N\in\N$, a sequence $(x^{(N)}_i)_{i=1}^{M_N}$ in the $N$th cordinate of $X=(\oplus_{N=1}^\infty\ell_q(\ell_p^{(N)}))_{\ell_q}$ which satisfies
Lemma \ref{L:1}.  From the conditions (a) and (b) in Lemma \ref{L:1}
it follows that  $(x^{(N)}_i)_{N\in\N,1\leq i \leq M_N}$ is a  1-unconditional and $\frac{1+\vp}{1-\vp}$-democratic sequence in $X$.  As the span of
$(x^{(N)}_i)_{i=1}^{M_N}$ is $K$-complemented in $\ell_q(\ell_p^N)$, for each $N\in\N$, it follows that the closed span of $(x^{(N)}_i)_{N\in\N,1\leq i \leq M_N}$ is $K$-complemented
in $X$.  Furthermore, as $\ell_p^N$ is $K$-complemented in the span of $(x^{(N)}_i)_{i=1}^{M_N}$ for each $N\in\N$, it
follows that $(\oplus_{N=1}^\infty\ell_p^N)_{\ell_q}$ is $K$-complemented in the closed span of $(x^{(N)}_i)_{N\in\N,1\leq i \leq M_N}$.  Thus by the
Pe\l czy\'nski decomposition theorem, $X$ is isomorphic to the closed span of $(x^{(N)}_i)_{N\in\N,1\leq i \leq M_N}$.  Hence
$X$, and thus $(\oplus_{n=1}^\infty \ell_p^n)_{\ell_q}$, has a greedy basis.
\end{proof}

\begin{proof}[Proof of Lemma \ref{L:1}]
For $\vp>0$, we choose numbers
$\vp_i\searrow0$ such that $\prod_{i=1}^\infty(1+\vp_i)<1+\vp$ and
$\prod_{i=1}^\infty(1-\vp_i)>1-\vp$. For each $n\in\N$, we denote  by $(e_{(i,n)})^{N}_{i=1}$   the unit vector basis for the $n$th coordinate of
$\ell_q(\ell_p^N)$, and
we denote by $(e^*_{(i,n)})^{N}_{i=1}$ their biorthogonal functionals.
Thus  the norm on $\ell_q(\ell_p^N)$ is calculated by
$$\Big\|\sum
a_{(i,n)}e_{(i,n)}\Big\|=\Big(\sum_{n=1}^\infty\Big(\sum_{i=1}^N|a_{(i,n)}|^p\Big)^{q/p}\Big)^{1/q} \text{ for $(a_{(i,n)}:1\kleq i\kleq N, n\kin\N)\subset \R$}. $$
If $x\in\ell_q(\ell_p^N)$, then we denote the support of $x$ by $\supp(x)=\{(i,n) |
e^*_{(i,n)}(x)\neq0\}$.

Before proceeding we fix two sequences of integers
$(m_i),(k_i)\in[\N]^{\omega}$ with $m_1=k_1=1$ and which satisfy the
following inequalities for all $i>1$.
\begin{enumerate}
\item[i)] $1/\vp_i<m_i$,
\item[ii)] $m_i^{1/q}+1<(1+\vp_i)^{1/q}m_i^{1/q}$,
\item[iii)] $(1+(m_i/k_i)^{1/q})^q<1+\vp_i$, and
\item[iv)] $1-\vp_i<(1-(m_i/k_i)^{1/q})^q$.
\end{enumerate}
The above inequalities can be easily guaranteed by first choosing
$m_i$ large enough to satisfy $i)$ and $ii)$, and  then choosing $k_i$
large enough to satisfy $iii)$ and $iv)$.  For the sake of
convenience we define $n_j=\prod_{i=1}^j k_i$ for all $j\in\N$.  We
define the finite family
$(x_{(i,j)})_{1\leq i\leq N, 1\leq j\leq n_N/n_i}$ by
$$x_{(i,j)}=\frac{1}{n_i^{1/q}}\sum_{s=1}^{n_i}e_{(i,s+(j-1)n_i)}\quad\text{for all } 1\leq i\leq N\textrm{ and }1\leq j\leq n_N/n_i.
$$
It is clear that $(x_{(i,j)})$ is a normalized and 1-unconditional
basic sequence as the sequence has pairwise disjoint support. Also,
$\bigcup_{i\le N, j\le n_N/n_i} \supp (x_{(i,j)})= \{1,2,\ldots, N\}\times \{1,2,\ldots, n_N\}$.
For each integer $1\leq \ell\leq N$ and subset
$$A\subset\{(i,j)\in\N^2|1\leq i\leq\ell \textrm{ and }1\leq j\leq
n_N/n_i\},$$ we will prove by induction on $\ell$ that
\begin{equation}\label{E:3}
|A|\prod_{i=2}^\ell(1-\vp_i)\leq \Big\|\sum_{(i,j)\in A}x_{(i,j)}\Big\|^q\leq|A|\prod_{i=2}^\ell(1+\vp_i).\end{equation}
First note
that if $\ell=1$ and $A\subset\{(1,j)\in\N^2|1\leq j\leq n_N\}$ then
$$\Big\|\sum_{(1,j)\in A}x_{(1,j)}\Big\|^q=\Big\|\sum_{(1,j)\in A}e_{(1,j)}\Big\|^q=|A|.$$
Thus  \eqref{E:3} is trivially satisfied.  We now assume that
equation \eqref{E:3} is satisfied for a given $1\leq\ell<N$ and we will
prove it for $\ell+1$.  We first partition the set $\Omega=\{(i,j)\in\N^2 : 1\leq
i\leq\ell+1 \textrm{ and }1\leq j\leq n_N/n_{i}\}$, 
into sets $\Omega_1,\Omega_2\ldots, \Omega_{n_N/n_{\ell+1}}$ defined for each $1\kleq r\kleq n_N/n_{\ell+1}$ by
$$\Omega_r:= \{(i,j)\in\N^2|1\leq i\leq\ell+1,
(r-1)n_{\ell+1}/n_i+1\leq j\leq r n_{\ell+1}/n_i\}.$$
Observe that for $1\kleq r\kleq n_N/n_{\ell+1}$, $1\kleq i\kleq \ell+1$, and $1\kleq j\kleq n_N/n_i$
\begin{align*} (i,j)\in \Omega_n&\iff (r-1)\frac{n_{\ell+1}}{n_i}+1\kleq j\kleq r \frac{n_{\ell+1}}{n_i}\\
                                          &\iff \supp(x_{(i,j)}) = \{i\}\times[(j-1)n_i+1,jn_i]\subset \{i\}\times [(r-1) n_{\ell+1}+1,r n_{\ell+1}].
\end{align*}
Since $\supp(x_{(\ell+1,r)})=\{\ell+1\}\times [(r-1) n_{\ell+1}+1,r n_{\ell+1}]$,
it follows that
$$(i,j)\in \Omega_r\iff \{(\ell+1, s): (i,s)\in\supp(x_{(i,j)})\}\subset \supp(x_{(\ell+1, r)}).$$
Given the set $A\subset\Omega$, we partition $A$ into $A_1$, $A_2$, ... $A_{n_N/n_{\ell+1}}$, by defining $A_r=A\cap \Omega_r$,
 for all $1\le r\le n_N/n_{\ell+1}$. 
We note that
$$\Big\|\sum_{(i,j)\in A}x_{(i,j)}\Big\|^q=\sum_{r=1}^{n_N/n_{\ell+1}}
\Big\|\sum_{(i,j)\in A_r}x_{(i,j)}\Big\|^q$$
and that $(x_{(i,j)})_{(i,j)\in\Omega_1}$ is 1-equivalent to $(x_{(i,j)})_{(i,j)\in\Omega_r}$, for all $1\le r\le n_N/n_{\ell+1}$.  Thus, to prove the
inequality (\ref{E:3}), we just need to consider the case $A=A_1$. We
first note that if $(\ell+1,1)\not\in A_1$, then the inequality
\eqref{E:3} is immediately true by the induction hypothesis.  Thus
we now assume that $(\ell+1,1)\in A_1$ and $A_1\setminus\{(\ell+1,1)\}\not=\emptyset$ .

Roughly speaking, we will argue that either $|A_1|$ is large enough so that
$\sum_{(i,j)\in A_1} x_{(i,j)}$ can be replaced by   $\sum_{(i,j)\in A_1\setminus\{\ell+1,1\}} x_{(i,j)}$,
or  $|A_1|$ is so small that a large part of the support  of $x_{(\ell+1,1)}$
is disjoint from
$$B_1= \big\{(\ell+1,n): (i,n)\in  \supp(x_{(i,j)}) \text{ for some $j\kin\{1,2\ldots n_N/n_i\}$}\},$$ and we can approximate $x_{(\ell+1,1)}$ by its projection
onto  $\text{\rm span}(e_{(\ell+1,n)}: n\kin\{1,2,\ldots,N\}\setminus B_1)$.

The first case we consider is
that $|A_1|\geq m_{\ell+1}$.  This assumption, together with the
inquality $m_{\ell+1}>1/\vp_{\ell+1}$, yields
$$\frac{|A_1|}{|A_1|-1}\leq
\frac{m_{\ell+1}}{m_{\ell+1}-1}<\frac{1}{1-\vp_{\ell+1}}.
$$
This allows us to obtain the desired  lower estimate. Indeed,
$$
\Big\|\sum_{(i,j)\in A_1} x_{(i,j)}\Big\|
\geq\Big\|\sum_{(i,j)\in A_1\setminus\{(\ell+1,1)\}} x_{(i,j)}\Big\|
\geq(|A_1|-1)\prod_{i=2}^\ell(1-\vp_i)\geq |A_1|\prod_{i=2}^{\ell+1}(1-\vp_i).
$$
To prove the upper estimate in (\ref{E:3}), we use that $|A_1|\geq
m_{\ell+1}$ together with $ii)$ to get
$$\frac{(|A_1|-1)^{1/q}+1}{|A_1|^{1/q}}<\frac{m_{\ell+1}^{1/q}+1}{m_{\ell+1}^{1/q}}<(1+\vp_{\ell+1})^{1/q}\!.
$$
Thus,
\begin{align*}
\Big\|\sum_{(i,j)\in A_1}x_{(i,j)}\Big\|&\leq\Big\|\sum_{(i,j)\in A_1\setminus{(\ell+1,1)}}x_{(i,j)}\Big\|+1\\
 &\leq(|A_1|\!-\!1)^{1/q}\Big(\prod_{i=2}^\ell(1+\vp_i)\Big)^{1/q}+\!\Big(\prod_{i=2}^\ell(1+\vp_i)\Big)^{1/q}
 <|A_1|^{1/q} \Big(\prod_{i=2}^{\ell+1}(1\!+\!\vp_i)\Big)^{1/q} .
\end{align*}

This completes the proof of  \eqref{E:3} for $\ell+1$ in the case
that $|A_1|\geq m_{\ell+1}$.  We now assume that $|A_1|< m_{\ell+1}$.  The
size of the support of each $x_{(i,j)}$ is given by
$|\supp(x_{(i,j)})|=n_i$ for all $1\leq i\leq \ell+1$ and $1\leq j\leq
n_{N}/n_{i}$.  We thus have a simple estimate for the size of the
union of the supports,
\begin{equation}\label{E:1a} |\cup_{(i,j)\in A_1\setminus\{(\ell+1,1)\}}\supp(x_{(i,j)})|=\sum_{(i,j)\in
A_1\setminus\{(\ell+1,1)\}}|\supp(x_{(i,j)})|<|A_1|n_\ell.
\end{equation}
We define sets
\begin{align*}
&B_1:=\{(\ell+1,n)\in\N^2 :(m,n)\in\cup_{(i,j)\in
A_1\setminus\{(\ell+1,1)\}}\supp(x_{(i,j)})\textrm{ for some }1\leq m\leq \ell\}
\text{ and }\\
&B_2:=\supp(x_{(\ell+1,1)})\setminus B_1.\end{align*}
The  inequality  \eqref{E:1a} gives
that $|B_1|<|A_1|n_\ell$.  For $i=1,2$, we define $P_{B_i} x_{(\ell+1,1)}$ by
$P_{B_i}x_{(\ell+1,1)}=\frac{1}{n_{\ell+1}^{1/q}}\sum_{(\ell+1,j)\in
B_i}e_{(\ell+1,j)}$. We may estimate the value $\|P_{B_1}x_{(\ell+1,1)}\|$ by
\begin{equation}\label{E:1b}
\|P_{B_1}x_{(\ell+1,1)}\|=\frac{1}{n_{\ell+1}^{1/q}}|B_1|^{1/q}<\Big(\frac{|A_1|n_\ell}{n_{\ell+1}}\Big)^{1/q}<
\Big(\frac{m_{\ell+1} n_\ell}{n_{\ell+1}}\Big)^{1/q}=\Big(\frac{m_{\ell+1}}{k_{\ell+1}}\Big)^{1/q}.
\end{equation}
We  use this to obtain the following estimate.
\begin{align*}
\Big\|\sum_{(i,j)\in A_1}x_{(i,j)}\Big\|^q&=\Big\|P_{B_2}x_{(\ell+1,1)}+P_{B_1}x_{(\ell+1,1)}+\sum_{(i,j)\in A_1\setminus\{(\ell+1,1)\}}x_{(i,j)}\Big\|^q\\
&=\|P_{B_2}x_{(\ell+1,1)}\|^q+\Big\|P_{B_1}x_{(\ell+1,1)}+\sum_{(i,j)\in A_1\setminus\{(\ell+1,1)\}}x_{(i,j)}\Big\|^q\\
&\leq1 + \Big[\Big(\frac{m_{\ell+1}}{k_{\ell+1}}\Big)^{1/q}+\Big(\big(|A_1|-1)\prod_{i=1}^\ell(1+\vp_i)\Big)^{1/q}\Big]^q\\
&=1+\Big[ 1+\Big(\frac{m_{\ell+1}}{k_{\ell+1}}\Big)^{1/q}
                \Big((|A_1|-1)\prod_{i=1}^\ell(1+\vp_i)\Big)^{-1/q}\Big]^q      (|A_1|-1)\prod_{i=1}^\ell(1+\vp_i)\\
                &\le \Big[ 1+\Big(\frac{m_{\ell+1}}{k_{\ell+1}}\Big)^{1/q}
                \Big(\prod_{i=1}^\ell(1+\vp_i)\Big)^{-1/q}\Big]^q\prod_{i=1}^\ell(1+\vp_i)\\
                &\qquad + \Big[ 1+\Big(\frac{m_{\ell+1}}{k_{\ell+1}}\Big)^{1/q}
                \Big((|A_1|-1)\prod_{i=1}^\ell(1+\vp_i)\Big)^{-1/q}\Big]^q      (|A_1|-1)\prod_{i=1}^\ell(1+\vp_i)\\
&\le\Big[1+\Big(\frac{m_{\ell+1}}{k_{\ell+1}}\Big)^{1/q}\Big(\prod_{i=1}^\ell(1+\vp_i)\Big)^{-1/q}\Big]^q  |A_1|\prod_{i=1}^\ell(1+\vp_i)\\
&\le|A_1| \prod_{i=1}^{\ell+1}(1+\vp_i)\quad\textrm{ by iii)}.
\end{align*}
For proving the remaining lower inequality in (\ref{E:3}), we will use the
following estimate for $\|P_{B_2}x_{(\ell+1,1)}\|$ which follows from \eqref{E:1b}.
$$\|P_{B_2}x_{(\ell+1,1)}\|\geq
1-\|P_{B_1}x_{(\ell+1,1)}\|>1-\Big(\frac{m_{\ell+1}}{k_{\ell+1}}\Big)^{1/q}.
$$
This yields
\begin{align*}
\Big\|\sum_{(i,j)\in
A_1}x_{(i,j)}\Big\|^q&=\|P_{B_2}x_{(\ell+1,1)}\|^q+\Big\|P_{B_1}x_{(\ell+1,1)}+\sum_{(i,j)\in A_1\setminus\{(\ell+1,1)\}}x_{(i,j)}\Big\|^q\\
&\geq\|P_{B_2}x_{(\ell+1,1)}\|^q+\Big(-\|P_{B_1}x_{(\ell+1,1)}\|+\Big\|\sum_{(i,j)\in A_1\setminus\{(\ell+1,1)\}}x_{(i,j)}\Big\|\Big)^q\\
&=\|P_{B_2}x_{(\ell+1,1)}\|^q\\
&\quad+\Big(1-\|P_{B_1}x_{(\ell+1,1)}\|\cdot\Big\|\sum_{(i,j)\in A_1\setminus\{(\ell+1,1)\}}x_{(i,j)}\Big\|^{-1}\Big)^q\Big\|\sum_{(i,j)\in
A_1\setminus\{(\ell+1,1)\}}x_{(i,j)}\Big\|^q\\
&\geq\Big(1-\Big(\frac{m_{\ell+1}}{k_{\ell+1}}\Big)^{1/q}\Big)^q+ \Big(1-\Big(\frac{m_{\ell+1}}{k_{\ell+1}}\Big)^{1/q}\Big)^q (|A_1|-1) \prod_{i=1}^\ell(1-\vp_i)\\
&\geq  \Big(1-\Big(\frac{m_{\ell+1}}{k_{\ell+1}}\Big)^{1/q}\Big)^q |A_1| \prod_{i=1}^\ell(1-\vp_i)\\
&>\prod_{i=1}^{\ell+1}(1-\vp_i)|A_1|\quad\textrm{by iv)}.\\
\end{align*}

Thus we have proven the inequalities (\ref{E:3}) in all cases.  It
remains to prove that there exists a constant $1\leq K<\infty$,
independent of $N\in\N$, such that $X:=\text{\rm span}(x_{(i,j)})$ is
$K$-complemented in $\ell_q(\ell_p^N)$ and $\ell_p^N$ is isometric
to a $K$-complemented subspace of $X$.  For each $1\leq
i\leq N$, we define the vector $y_i$ as
$$y_i:=\frac{n_i^{1/q}}{n_N^{1/q}}\sum_{j=1}^{n_N/n_i}x_{(i,j)}=\frac{1}{n_N^{1/q}}\sum_{j=1}^{n_N}e_{(i,j)}.
$$
It should be clear that $(y_i)_{i=1}^N$ is 1-equivalent to the unit
vector basis for $\ell_p^N$.  Indeed, if $(a_i)\in\ell_p^N$ then
$$\Big\|\sum_{i=1}^N a_i y_i\Big\|=
\Big(\sum_{j=1}^{n_N}\Big(\sum_{i=1}^N\frac{|a_i|^p}{n_N^{p/q}}\Big)^{q/p}\Big)^{1/q}
=\Big(\frac{1}{n_N}\sum_{j=1}^{n_N}\Big(\sum_{i=1}^N|a_i|^p\Big)^{q/p}\Big)^{1/q}
=\Big(\sum_{i=1}^N|a_i|^p\Big)^{1/p}.$$

We  let  $Y=\text{\rm span}(y_i)$ and
define projections $T_X:\ell_q(\ell_p^N)\rightarrow X$ and
$T_Y:\ell_q(\ell_p^N)\rightarrow Y$ by
\begin{align*}
T_X\Big(\sum a_{(i,j)}e_{(i,j)}\Big)&=\sum_{i=1}^N\sum_{j=1}^{n_N/n_i}\sum_{s=1}^{n_i}\Big(\frac{1}{n_i}\sum_{k=1}^{n_i}a_{(i,k+(j-1)n_i)}\Big)e_{(i,s+(j-1)n_i)}\\
&=\sum_{i=1}^N \sum_{j=1}^{n_N/n_i} \Big(\frac1{n_i^{(q-1)/q}}  \sum_{k=1}^{n_i} a_{(i,k+(j-1)n_i)}\Big) x_{(i,j)}
\text{ and }\\
T_Y\Big(\sum a_{(i,j)}e_{(i,j)}\Big)&=\sum_{i=1}^N\sum_{s=1}^{n_N}\Big(\frac{1}{n_N}\sum_{k=1}^{n_N}a_{(i,k)}\Big)e_{(i,s)}
= \sum_{i=1}^N \Big(\frac1{n_N^{(q-1)/q}} \sum_{k=1}^{n_N}a_{(i,k)}\Big) y_i  .
\end{align*}
It is simple to check that $T_X$ and $T_Y$ are projections of
$\ell_q(\ell_p^N)$ onto $X$ and $Y$ respectively.  As $Y$ is a
subspace of $X$, we have that $T_Y$ restricted to $X$ is a
projection of $X$ onto $Y$.  Thus we just need to prove that there
exists a uniform constant $K$ such that $\|T_X\|,\|T_Y\|\leq K$.  We
note that $T_X=T_Y$ if, considering a more general case,
$n_i=n_N$ for all $1\leq i\leq N$.  Thus
proving there exists a constant $K$ independent of $(n_i)_{i=1}^N$
and $N$ such that $\|T_X\|\leq K$ will prove the inequality for
$\|T_Y\|$ as well.  We first consider the case that $p=q$.  In this
case, the basis $(e_{(i,j)})$ for the space $\ell_q(\ell_q^N)$ is
symmetric. The operator $T_X$ is then an averaging operator on a
space with a symmetric basis, and hence has norm
one.

Now if $q<p<\infty$, then $\ell_q(\ell_p^N)$ is an interpolation
space for the spaces $\ell_q(\ell_q^N)$ and $\ell_q(\ell_\infty^N)$.
Then by the vector valued Riesz-Thorin interpolation theorem \cite{BP} (see also \cite{C}) , if $\theta=\frac{q}{p}$ then
$$\|T_X\|_{\ell_q(\ell_p^N)}\leq
\|T_X\|^\theta_{\ell_q(\ell_q^N)}\|T_X\|^{1-\theta}_{\ell_q(\ell_\infty^N)}\leq\|T_X\|_{\ell_q(\ell_\infty^N)}.
$$
Thus if we prove that there exists a uniform constant $K$ such that
$\|T_X\|_{\ell_q(\ell_\infty^N)}\leq K$ then the result will follow
as well for all $1<q<p<\infty$. On the other hand, if $1\leq
p<q<\infty$ then $1<q'<p'\leq\infty$, with  $q'=q/(q-1)$ and
$p'=p/(p-1)$.  It is simple to check that our operator
$T_X:\ell_q(\ell_p^N)\rightarrow\ell_q(\ell_p^N)$ has adjoint
$T^*_X=T_X:\ell_{q'}(\ell_{p'}^N)\rightarrow\ell_{q'}(\ell_{p'}^N)$.
We thus have that
$\|T_X\|_{\ell_q(\ell_p^N)}=\|T^*_X\|_{\ell_{q'}(\ell_{p'}^N)}\leq
K$.

All that remains is to prove is that there exists a uniform constant
$K$ such that $\|T_X\|_{\ell_q(\ell_\infty^N)}\leq K$. This constant
$K$ will come from a  {\em discretization } of the classical
Hardy-Littlewood maximal operator \cite{HL}, which is defined as
\begin{equation}
n_u(g)(x)=\sup_{y<x<z}\frac{1}{z-y}\int_y^z|g(t)|dt\quad\textrm{ for }x\in\R\textrm{ and }g\in L^1_{loc}(\R).
\end{equation}

It is known that the operator $n_u$ is of strong type $(q,q)$ for
$1<q\leq\infty$ and for a proof of this see \cite[Theorem 8.9.1 and Corollary 8.9.1]{G}.  In other
words, there exists a constant $1\leq K<\infty$ such that
$$
\Big(\int|n_u(g)(x)|^q dx \Big)^{1/q}\leq K \Big(\int |g(x)|^q dx\Big)^{1/q} \text{ for all $g\in
L_q(\R)$.}$$
By applying this to step functions whose discontinuities
are contained in $\N$, we get the following inequality for
$\ell_q$.
\begin{equation}\label{E:2}
\Big(\sum_{j\in\N}\Big(\sup_{m\leq j\leq n}\frac{1}{n-m+1}\sum_{k=m}^n|a_k|\Big)^q\Big)^{1/q}\leq K
\Big(\sum_{j\in\N}|a_j|^q\Big)^{1/q}\quad\textrm{for all }(a_j)\in\ell_q.
\end{equation}

We  now  prove that
$\|T_X\|_{\ell_q(\ell_\infty^N)}\leq K$.  As $\ell_q(\ell_\infty^N)$ is
reflexive, the operator $T_X$ attains it's norm at an extreme point
of the ball $B_{\ell_q(\ell_\infty^N)}$.  The set of extreme points
of $B_{\ell_q(\ell_\infty^N)}$ is given by
$$Ext(B_{\ell_q(\ell_\infty^N)})=\Big\{\sum_{j=1}^\infty
a_j\sum_{i=1}^N\vp_{(i,j)}e_{(i,j)} : \vp_{(i,j)}=\pm1,(a_j)\in
S_{\ell_q}\Big\}.$$ Thus there exists constants $\vp_{(i,j)}=\pm1$ and
a sequence $(a_j)\in S_{\ell_q}$ such that
\begin{align*}
\|T_X\|_{\ell_q(\ell_\infty^N)}&=\Big\|T_X\Big(\sum_{j=1}^\infty a_j\sum_{i=1}^N\vp_{(i,j)}e_{(i,j)}\Big)\Big\|\\
&=\Big\|\sum_{i=1}^N\sum_{j=1}^{n_N/n_i}\sum_{s=1}^{n_i}\Big(\frac{1}{n_i}\sum_{k=1}^{n_i}\vp_{(i,k+(j-1)n_i)}a_{k+(j-1)n_i}\Big)e_{(i,s+(j-1)n_i)}\Big\|\\
&\leq\Big\|
\sum_{i=1}^N\sum_{j=1}^{n_N/n_i}\sum_{s=1}^{n_i}\Big(\frac{1}{n_i}\sum_{k=1}^{n_i}|a_{k+(j-1)n_i}|\Big)e_{(i,s+(j-1)n_i)}\Big\|\\
&\leq\Big\|
\sum_{i=1}^N\sum_{j=1}^{n_N/n_i}\sum_{s=1}^{n_i}\Big(\sup_{m\leq
1+(j-1)n_i\leq n}\frac{1}{n+1-m}\sum_{k=m}^{n}|a_{k}|\Big)e_{(i,s+(j-1)n_i)}\Big\|\\
&\le\Big\|\sum_{i=1}^N\sum_{j=1}^{n_N}\Big(\sup_{m\leq j\leq
n}\frac{1}{n+1-m}\sum_{k=m}^{n}|a_{k}|\Big)e_{(i,j)}\Big\|\\
&=\Big(\sum_{j=1}^{n_N}\Big(\sup_{m\leq j\leq n}\frac{1}{n+1-m}\sum_{k=m}^{n}|a_{k}|\Big)^q\Big)^{1/q}\\
&\leq K \Big(\sum_{j=1}^{n_N}|a_{k}|^q\Big)^{1/q}\quad\textrm{ by }(\ref{E:2}).
\end{align*}
\end{proof}

\begin{thm}\label{T:3}
The Banach space $(\oplus_{n=1}^\infty \ell_p^n)_{\ell_1}$ does not
have a greedy basis whenever $1\!<\!p\kleq\infty$.
\end{thm}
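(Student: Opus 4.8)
The plan is to suppose, toward a contradiction, that $Z:=(\oplus_{n=1}^\infty\ell_p^n)_{\ell_1}$ has a greedy basis $(x_i)$. Applying the democracy conclusion of Theorem~A to singletons shows any greedy basis is semi-normalized, so I may assume $(x_i)$ is normalized; then by Theorem~A it is $C$-unconditional and $\Delta$-democratic for some constants $C,\Delta<\infty$. The first observation I would record is that $Z$ is Schur: any normalized block sequence with respect to the natural finite-dimensional decomposition $(\ell_p^n)_{n=1}^\infty$ of $Z$ is \emph{isometrically} equivalent to the unit vector basis of $\ell_1$, simply because the outer norm is the $\ell_1$-norm; a standard gliding-hump argument then rules out normalized weakly null sequences, since such a sequence would be a small perturbation of a block sequence, hence simultaneously weakly null and $\ell_1$-basic. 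In particular $Z$ contains no copy of $c_0$, so the unconditional basis $(x_i)$ is boundedly complete and $Z\cong[x_i^*]^*$, where $[x_i^*]\subset Z^*=(\oplus_{n=1}^\infty\ell_{p'}^n)_{c_0}$ is the closed span of the biorthogonal functionals.

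Next I would show the fundamental function of $(x_i)$ is linear. Since $(x_i)$ is a normalized unconditional basis of the Schur space $Z$ it is not weakly null, so there exist $f\in Z^*$ and a subsequence with $f(x_{i_k})\ge\delta>0$; unconditionality then gives $\|\sum_k a_k x_{i_k}\|\ge(\delta/C)\sum_k|a_k|$, i.e. $(x_{i_k})$ is equivalent to the $\ell_1$-basis. Hence $\|\sum_{i=1}^n x_i\|\ge cn$, and since trivially $\|\sum_{i\in A}x_i\|\le|A|$, democracy yields $c'|A|\le\|\sum_{i\in A}x_i\|\le|A|$ for \emph{every} finite $A\subset\N$, with $c'=c'(C,\Delta)>0$. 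Thus $(x_i)$ behaves democratically like the $\ell_1$-basis on characteristic vectors.

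The contradiction must come from confronting this ``$\ell_1$-democratic'' behaviour with the fact that $Z$ contains $\ell_p^N$ isometrically and uniformly complemented, together with $p>1$. The route I would take is dual: using that every $n$-subset $A$ has $\|\sum_{i\in A}x_i\|\asymp n$, I would bound $\|\sum_{i\in A}x_i^*\|$ uniformly in $A$, conclude that $(x_i^*)$ is equivalent to the $c_0$-basis, so $[x_i^*]\cong c_0$ and therefore $Z\cong c_0^*=\ell_1$. This is absurd, because $(\oplus_{n=1}^\infty\ell_p^n)_{\ell_1}\not\cong\ell_1$ for $1<p\le\infty$: for $p\ge2$ this is immediate from cotype, since $\ell_p^N\hookrightarrow Z$ isometrically while the cotype-$2$ constant of $\ell_p^N$ is of order $N^{1/2-1/p}\to\infty$; for $1<p<2$ it follows from the essential uniqueness of the unconditional basis of $\ell_1$ — the natural basis of $Z$ is a normalized $1$-unconditional basis which is \emph{not} democratic (a spread-out set of $n$ basis vectors has norm $n$, while $n$ basis vectors inside a single block $\ell_p^N$ have norm $n^{1/p}$), whereas every normalized unconditional basis of $\ell_1$ is permutatively equivalent to the symmetric unit vector basis and hence democratic.

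The main obstacle is the duality step in the last paragraph: the naive dyadic estimate of $\|\sum_{i\in A}x_i^*\|$ from $\|\sum_{i\in A}x_i\|\asymp|A|$ loses a logarithmic factor, so to get a genuinely bounded ``dual fundamental function'' (equivalently, true $\ell_1$-equivalence of $(x_i)$) one has to work harder — I would exploit the uniformly complemented copies of $\ell_p^N$ directly, either by testing the functionals $\sum_{i\in A}x_i^*$ against suitably flat unit vectors of these $\ell_p^N$-copies, or by truncating the images of an $\ell_p^N$-basis to land a uniformly complemented, uniformly $\ell_p^N$-isomorphic subspace inside $[x_i]_{i\le m}$ and then deriving a Banach--Mazur-distance contradiction with the forced $\ell_1$-democracy of $(x_i)_{i\le m}$. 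I expect the case $1<p<2$ of ``$Z\not\cong\ell_1$'' to be the subtlest point, since there cotype is unavailable (the spaces $\ell_p^N$ do embed, non-complementably, into $\ell_1$) and one must instead invoke the uniqueness of the unconditional basis of $\ell_1$.
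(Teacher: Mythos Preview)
Your setup is sound: the Schur property of $Z=(\oplus_n\ell_p^n)_{\ell_1}$, the existence of a subsequence of $(x_i)$ equivalent to the $\ell_1$-basis, and hence (via democracy) the two-sided estimate $c'|A|\le\|\sum_{i\in A}x_i\|\le|A|$ are all correct and are also used in the paper. Your argument that $Z\not\cong\ell_1$ is essentially fine, though note that your cotype argument collapses at $p=2$ (there $N^{1/2-1/p}=1$); the uniqueness-of-unconditional-basis argument you give for $1<p<2$ covers $p=2$ as well.

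The genuine gap is exactly the one you flag yourself, and it is not a technicality. From the linear fundamental function $\|\sum_{i\in A}x_i\|\asymp|A|$ and unconditionality alone one \emph{cannot} conclude that $(x_i)$ is equivalent to the $\ell_1$-basis, nor that $\sup_A\|\sum_{i\in A}x_i^*\|<\infty$; the dyadic argument really does lose a logarithm, and there is no abstract repair. Your two proposed workarounds (testing $\sum_{i\in A}x_i^*$ against flat $\ell_p^N$-vectors, or truncating to trap a complemented $\ell_p^N$ inside $[x_i]_{i\le m}$) are plausible heuristics but neither is carried out, and it is not clear either yields a contradiction without a structural input comparable in strength to what the paper uses.

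The paper avoids this obstacle entirely by passing to the dual and invoking a result of Bourgain--Casazza--Lindenstrauss--Tzafriri (Proposition~B): any finite unconditional basic sequence that spans a uniformly complemented subspace of $(\oplus V)_{c_0}$, with $V$ of nontrivial type and cotype, admits a partition $\{\tau_s\}$ on which the norm is squeezed between $\max_s(\sum_{\tau_s}|\alpha_n|^{\bar q})^{1/\bar q}$ and $\max_s(\sum_{\tau_s}|\alpha_n|^{\underline q})^{1/\underline q}$. Applied to the restrictions $z^*_{(N,j)}$ of $x_j^*$ (which sit $C_u$-complementedly in $(\oplus_n\ell_{p'}^n)_{c_0}$), this gives a dichotomy: either the partition blocks $\tau_s^N$ stay uniformly bounded, forcing $(x_i^*)\sim c_0$ and hence $Z\cong\ell_1$ (contradiction), or some block $\tau_s^N$ is arbitrarily large, and then H\"older against the $\ell_{\bar q}$ lower bound gives $\|\sum_{i\in\tau_s^N}x_i\|\lesssim|\tau_s^N|^{1-1/\bar q}$, contradicting the linear fundamental function. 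Thus the paper never proves $(x_i)\sim\ell_1$ globally; it uses Proposition~B to manufacture a specific finite set on which democracy fails. This external structural lemma is the missing idea in your proposal.
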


We recall that Bourgain, Casazza, Lindenstrauss, and Tzafriri
proved that the spaces $(\oplus_{n=1}^\infty \ell^n_2)_{\ell_1}$
and $(\oplus_{n=1}^\infty \ell^n_\infty)_{\ell_1}$ each have
unconditional bases which are unique up to permutation
\cite{BCLT}. In particular, these spaces cannot have a greedy
basis, as they each have an unconditional basis which is not
greedy. Thus we need only to  prove Theorem \ref{T:3} for the case
$1<p<\infty$. This is important for us as $\ell_p$ has non-trivial
type and cotype when $1<p<\infty$. We rely on the following
proposition, which was used in \cite{BCLT} to prove, among other
uniqueness results, that $(\oplus_{n=1}^\infty \ell^n_2)_{\ell_1}$
has a unique unconditional basis up to permutation.

\begin{propB}\label{P:2} {\rm  \cite[Proposition 2.1]{BCLT}  }
Let $V$ be a Banach lattice of type p and cotype q, for some $1\leq
p\leq q <\infty$ and let $C_c,C_u\geq1$ be constants.  There exists
a uniform constant $K\geq 1$ which satisfies the following
statement. If $Z$ is a $C_c$-complemented subspace of the direct sum
$X=(\oplus_{n=1}^\infty V)_{c_0}$ and $Z$ has a normalized basis
$(z_n)_{n=1}^k$ with unconditional constant $C_u$, then there exists
a partition of the integers $\{1,2,\ldots,k\}$ into mutually
disjoint subsets $\{\tau_s\}_{s=1}^r$ so that, for any choice of
scalars $\{\alpha_n\}_{n=1}^k$, we have
$$K^{-1}\max_{1\leq s\leq r}(\sum_{n\in\tau_s}|\alpha_n|^q)^{1/q}\leq\Big\|\sum_{n=1}^k \alpha_n
z_n\Big\|\leq K\max_{1\leq s\leq r}(\sum_{n\in\tau_s}|\alpha_n|^p)^{1/p}.
$$
\end{propB}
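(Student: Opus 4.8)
The plan is to reproduce the structure theorem of Bourgain--Casazza--Lindenstrauss--Tzafriri by playing the lattice type/cotype of $V$ against the $c_0$-sum structure of $X$. I would begin with two reductions. First, since finitely supported vectors are dense in $X=(\oplus_{n=1}^\infty V)_{c_0}$ and $(z_n)_{n=1}^k$ is a finite system, a routine small perturbation lets me assume each $z_n$ is supported on finitely many of the $V$-coordinates, at the cost of slightly worsening $C_u$ and $C_c$. Second, I would isolate the only two lattice facts I need. For \emph{disjoint} elements $x_1,\dots,x_j$ of a Banach lattice the norm $\|\sum_i \varepsilon_i x_i\|$ is independent of the signs $\varepsilon_i$, so the Rademacher average equals $\|\sum_i x_i\|$; feeding this into the definitions of type $p$ and cotype $q$ yields an upper $p$-estimate $\|\sum_i x_i\|\le T_p(\sum_i\|x_i\|^p)^{1/p}$ and a lower $q$-estimate $(\sum_i\|x_i\|^q)^{1/q}\le C_q\|\sum_i x_i\|$ in $V$. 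The crucial point governing the whole proof is that $X$ itself has \emph{trivial} cotype, owing to its $c_0$-sum structure; hence cotype is available only \emph{within} a single $V$-coordinate, and this is exactly what forces a maximum over blocks rather than a single $\ell_q$-norm.

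Next I would construct the partition. On $\{1,\dots,k\}$ I define a graph by joining $n$ and $m$ whenever there is a coordinate $l$ in which both $z_n$ and $z_m$ carry mass above a threshold $\delta$ chosen in terms of $p,q,C_u,C_c$; the blocks $\tau_s$ are the connected components, after discarding negligible tails. Note that all basis vectors heavy in a fixed coordinate are pairwise adjacent and hence lie in one block, so each coordinate is substantially occupied by at most one block. The intended dichotomy is that two basis vectors in the same block share coordinates and interact through the type/cotype of a single lattice $V$, whereas vectors in different blocks are nearly disjointly supported in the $c_0$-sense, so that the norm of a sum is governed by a maximum over the blocks.

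For the upper estimate I would compute $\|\sum_n \alpha_n z_n\|_X = \max_l \|\sum_n \alpha_n z_n^{(l)}\|_V$, where $z_n^{(l)}$ is the $l$-th coordinate of $z_n$. In each fixed coordinate, unconditionality together with the upper $p$-estimate (type $p$) of $V$ bounds the inner norm by a multiple of $(\sum_n |\alpha_n|^p\|z_n^{(l)}\|_V^p)^{1/p}$; using that the vectors with substantial mass in a given coordinate belong to a single block, and absorbing the small-mass tails, I would collapse this to $K\max_s(\sum_{n\in\tau_s}|\alpha_n|^p)^{1/p}$.

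The lower estimate is where the real difficulty lies, and where the complementation hypothesis is indispensable. I would fix a block $\tau_s$ and aim for $\|\sum_n\alpha_n z_n\|\ge K^{-1}(\sum_{n\in\tau_s}|\alpha_n|^q)^{1/q}$. The natural device is to pass to the biorthogonal functionals of the basis and extend them through the projection $P:X\to Z$, obtaining $f_n=P^*z_n^*$ in $X^*=(\oplus_{n=1}^\infty V^*)_{\ell_1}$ with $f_n(z_m)=\delta_{nm}$ and $\|f_n\|\le C_cC_u$; testing $\sum_n\alpha_n z_n$ against suitable normalized combinations of the $f_n$ and invoking the cotype $q$ of $V$ (equivalently the type $q'$ of $V^*$) coordinate by coordinate should produce the lower $\ell_q$-bound inside the block. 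The main obstacle is precisely the reassembly across coordinates: because $X$ has no global cotype, the cotype estimate can be applied only inside each $V$-coordinate, and one must glue the per-coordinate bounds over a block whose vectors may be spread over many coordinates with no single dominant one. I expect this to require an iterative extraction, a tree or successive-disjointification argument, in which the complementation constant $C_c$ is what prevents the coefficient mass from dissipating into the $c_0$-direction; controlling the accumulated constants through this gluing, uniformly in $k$ and in the number of coordinates, is the heart of the proof.
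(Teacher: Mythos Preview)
The paper does not prove Proposition~B at all: it is quoted verbatim as \cite[Proposition~2.1]{BCLT} and used as a black box in the proof of Theorem~\ref{T:3}. There is therefore no ``paper's own proof'' to compare your attempt against.

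That said, your sketch is broadly in the spirit of the original BCLT argument, but there is a genuine gap in the upper estimate. You write that in a fixed coordinate $l$, ``unconditionality together with the upper $p$-estimate (type~$p$) of $V$'' bounds $\|\sum_n \alpha_n z_n^{(l)}\|_V$ by a multiple of $(\sum_n |\alpha_n|^p \|z_n^{(l)}\|_V^p)^{1/p}$. The problem is that the coordinate slices $z_n^{(l)}$ are neither disjoint in $V$ nor, a priori, an unconditional sequence in $V$: unconditionality of $(z_n)$ in $X$ does not project down to a single $V$-coordinate. Your preliminary observation that type~$p$ yields an upper $p$-estimate was derived only for \emph{disjoint} elements, so it does not apply here without further work. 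In BCLT the actual mechanism is different: they exploit the lattice structure of $V$ together with the projection to replace the $z_n$ (or rather their functional counterparts) by disjointly supported surrogates, and it is to these surrogates that the type/cotype estimates are applied. Your graph-component partition is reasonable in spirit, but the analytic step that converts ``heavy in the same coordinate'' into a usable $p$-estimate requires this disjointification, which your outline does not supply.
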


Using Proposition~B, we are now prepared to give a proof of
Theorem \ref{T:3}.
\begin{proof}[Proof of Theorem \ref{T:3}]

Let $1<p<\infty$.  To reach a contradiction, we assume that
$X=(\oplus_{n=1}^\infty \ell_p^n)_{\ell_1}$ has a normalized basis
$(x_i)_{i=1}^\infty$ which is $C_d$-democratic and $C_u$-unconditional for
some constants $C_d,C_u\geq1$. Let $(x^*_i)\subset \big(\oplus_{n}\ell_p^n\big)_{\ell_1}^*=
\big(\oplus_{n}\ell^n_{q}\big)_{\ell_\infty}$, with $\frac1p+\frac1{q}=1$, be the  biorthogonal functionals.
 We let $(e_{(i,n)})_{1\leq i\leq n<\infty}$ to be the unit vector basis for $X$ with biorthogonal
functionals $(e^*_{(i,n)})_{1\leq i\leq n<\infty}$. By a standard
perturbation argument we may assume that
$$\supp(x_j)=\{(i,n):n\in\N, i\le n, e^*_{(i,n)}(x_j)\not=0\}$$ is finite for each $j\in\N$.

We now fix $N\in\N$.  There exists $M_N\in\N$ such that
$(x_j)_{j=1}^{N}\subset (\oplus_{n=1}^{M_N} \ell_p^n)_{\ell_1}$, i.e.
$e^*_{(i,n)}(x_j)=0$ for all $n>M_n$ and  $i\leq n$.  Define
$(z^*_{(N,j)})_{j=1}^N\subset (\oplus_{n=1}^{M_N} \ell_{p}^n)_{\ell_1}^*$ by
$z^*_{(N,j)}=x^*_j|_{(\oplus_{n=1}^{M_N} \ell_{p}^n)_{\ell_1}}$ for all
$1\leq j\leq N$. As $(x_j)_{j=1}^\infty$ has basis constant at most
$C_u$, it is easy to show both that $(x^*_j)_{j=1}^{N}$ is
$C_u$-equivalent to $(z^*_{(N,j)})_{j=1}^{N}$, and that the span of
$(z^*_{(N,j)})_{j=1}^N$ is $C_u$-complemented in $(\oplus_{n=1}^{M_N}
\ell_p^n)_{\ell_1}^*$.
Indeed, let $J:\text{\rm span}(x_i)_{i=1}^N\rightarrow X$ be the inclusion map.  Then
$J^*:X^*\rightarrow \text{\rm span}(x^*_i)_{i=1}^N$ is a quotient map of norm 1.  Let
$H:\text{ \rm span}(x^*_i)_{i=1}^N\rightarrow \text{\rm span}(z^*_{(N,i)})_{i=1}^N$ be the isomorphism defined
by $H(x_i^*)=z^*_{(N,i)}$, which has norm at most $C_u$.  Then $H\circ J^*:X^*\rightarrow
\text{\rm span}(z^*_{(N,i)})_{i=1}^N$ is a quotient map of norm at most $C_u$.  We just need to check
that $H\circ J^*(z^*_{(N,i)})=z^*_{(N,i)}$.  Indeed,  for $x\in \text{\rm span}(x_j:j\le N)$,
$J^*(z^*_{(N,i)})(x)=z^*_{(N,i)}(x)=x_i^*(x)$.  Hence, $J^*(z^*_{(N,i)})=x^*_i$ and
$H\circ J^*(z^*_{(N,i)})=z^*_{(N,i)}$.  Thus $H\circ J^*$ is a projection of norm at most $C_u$.

We will be applying Proposition~B for the space
$V=\ell_q$ and consider the spaces
$\text{\rm span} (z^*_{(N,i)}:i\le N)$, $N\in\N$,  (in the natural way) to be $C_u$-complemented  subspaces of $c_0(\ell_q)$.
For the sake of
convenience, we denote $\underline{q}=\min(q,2)$ and $\bar{q}=\max(q,2)$.  We have thus defined $\underline{q}$ and $\bar{q}$ exactly so
that $\ell_q$ has type $\underline{q}$ and cotype $\bar{q}$.  By
Proposition~B, there exists a constant $K$ independent of
$N\in\N$, and there exists for all $N\in\N$ a partition of the
integers $\{1,2,\ldots,N\}$ into mutually disjoint subsets
$\{\tau^N_s\}_{s=1}^{r_N}$ so that, for any choice of scalars
$\{\alpha_n\}_{n=1}^N$, we have
\begin{equation}\label{E:1}
K^{-1}\max_{1\leq s\leq
{r_N}}(\sum_{n\in\tau^N_s}|\alpha_n|^{\bar{q}})^{1/{\bar{q}}}\leq\Big\|\sum_{n=1}^N
\alpha_n z^*_{(N,n)}\Big\|\leq K\max_{1\leq s\leq
{r_N}}(\sum_{n\in\tau^N_s}|\alpha_n|^{\underline{q}})^{1/{\underline{q}}}.
\end{equation}
We first consider the case that $\sup_{N\in\N}\max_{1\leq s\leq
r_N}|\tau^N_s|<\infty$.  In this case we have that if $N\in\N$ and
$(\alpha_i)_{i=1}^N\subset\R$ then
$$
\Big\|\sum_{i=1}^N \alpha_i x^*_i\Big\|\leq C_u \Big\|\sum_{i=1}^N \alpha_i
z^*_{(N,i)}\Big\|\leq C_u K \big( \sup_{M\in\N}\max_{1\leq s\leq r_M}|\tau^M_s|\big)
\max_{1\leq i\leq N}|\alpha_i|.
$$
Hence $(x^*_i)$ is equivalent to the unit vector basis  of $c_0$, which
implies that $(x_i)$ is equivalent to the unit vector basis of $\ell_1$.
This is a contradiction as $(x_i)$ is a basis for
$X=(\oplus_{n=1}^\infty \ell_p^n)_{\ell_1}$, and $X$ is not isomorphic
to $\ell_1$ as $1<p<\infty$.  We now consider the remaining case
that $\sup_{N\in\N}\max_{1\leq s\leq r_N}|\tau^N_s|=\infty$.

First
note that there exists a subsequence of $(x_i)_{i=1}^\infty$ which
is equivalent to the unit vector basis basis of $\ell_1$. Indeed,  there is a subsequence
$(x_j')$ which  converges in the $w^*$-topology (considering $X$ as the dual of $(\oplus_{n=1}^\infty \ell_q^n)_{c_0}$)
to some $x$. If $x\not= 0$, then it follows from the assumed unconditionality that $(x_j')$   is equivalent to the unit vector basis of $\ell_1$,
and if $x=0$, then  we can find a perturbation $(z'_j)$ of  a further subsequence, so that  for each $n\in\N$, there is at most one $j$ so that the
 $\ell_q^n$ component of $z'_j$  is not $0$. But this also implies that there is a subsequence equivalent to the unit vector basis of $\ell_1$.

 Thus as $(x_i)$ is
democratic,  there is a $C\ge 1$ so that $C\Big\|\sum_{i\in A}x_i\Big\|>|A|$ for all finite sets $A\subset\N$.  We choose
$N\in\N$ and $1\leq s\leq r_N$ such that
$|\tau^N_s|^{1/\bar{q}}>2K C C_u^2$. We have the following estimates.
\begin{align*}
\Big\|\sum_{i\in\tau^N_s} x_i\Big\|&\leq2C_u \Big(\sum_{i\in\tau^N_s} b_i x_i^*\Big)\Big(\sum_{i\in\tau^N_s} x_i\Big)
   \quad\textrm{ for some $(b_i)\in\coo$, with} \Big\|\sum_{i\in\tau^N_s} b_i x^*_i  \Big\|=1\\
&\leq 2C_u^2 \frac{\sum_{i\in\tau^N_s}|b_i|}{\Big\|\sum_{i\in\tau^N_s} b_i
z_{(N,i)}^*\Big\|}\\
&\leq 2KC_u^2\frac{\sum_{i\in\tau^N_s}|b_i|}{\sum_{i\in\tau^N_s} |b_i|^{\bar{q}})^{1/{\bar{q}}}}\\
&\leq2KC_u^2 |\tau^N_s|^{1-1/\bar{q}}\quad\textrm{ by
H\"{o}lder's inequality.}\\
\end{align*}
Combining this result with the inequality $|\tau^N_s|^{1/\bar{q}}>2K
C C_u^2$ gives the following contradiction. $$2K C C_u^2
|\tau^N_s|^{1-1/\bar{q}} <|\tau^N_s|<C \Big\|\sum_{i\in\tau^N_s}
x_i\Big\|\leq 2KC C_u^2 |\tau^N_s|^{1-1/\bar{q}}$$

As both possible cases result in a contradiction, we see that
$(\oplus_{n=1}^\infty \ell_p^n)_{\ell_1}$ cannot have a greedy basis
when $1<p<\infty$.
\end{proof}

Finally the case $q=\infty$ and $1\le p<\infty$ in Theorem \ref{T:2} is easy to handle.
\begin{prop}\label{P:2.3}
For $1\le p<\infty$ the space $(\oplus \ell_p^n)_{c_0}$ does not have a greedy basis.
\end{prop}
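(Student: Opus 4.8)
The plan is to run Proposition~B on the basis itself, much as Theorem~\ref{T:3} runs it on the dual basis, and to play this off against the fact that $X:=(\oplus\ell_p^n)_{c_0}$ contains $c_0$: once $1$‑complementedly (the copy spanned by the first unit vector of each summand), so that $X$ has no nontrivial cotype, and once as the obstruction to $X$ being $\ell_1$‑like. Suppose, for a contradiction, that $X$ has a greedy basis $(x_i)$. By Theorem~A it is $K$‑unconditional and $\Delta$‑democratic, and after a small perturbation each $x_i$ has finite support in the canonical basis. Fix $N$ and pick $M_N$ with $x_1,\dots,x_N\in(\oplus_{n\le M_N}\ell_p^n)_{c_0}$; this space sits isometrically and $1$‑complementedly in $c_0(\ell_p):=(\oplus_{k=1}^\infty\ell_p)_{c_0}$. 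Hence $Z_N:=\mathrm{span}(x_1,\dots,x_N)$, being the range of the (uniformly bounded) partial‑sum projection $S_N$, is a uniformly complemented subspace of $c_0(\ell_p)$ carrying the normalized $K$‑unconditional basis $x_1,\dots,x_N$. Since $\ell_p$ is a Banach lattice of type $\underline p:=\min(p,2)$ and cotype $\bar p:=\max(p,2)$ with $1\le\underline p\le\bar p<\infty$ (for $p=1$ one uses that $\ell_1$ has cotype $2$), Proposition~B applies uniformly in $N$: there is a constant $K'$ and, for each $N$, a partition $\{1,\dots,N\}=\bigcup_{s=1}^{r_N}\tau^N_s$ with
$$(K')^{-1}\max_{s\le r_N}\Big(\sum_{n\in\tau^N_s}|\alpha_n|^{\bar p}\Big)^{1/\bar p}\ \le\ \Big\|\sum_{n=1}^N\alpha_nx_n\Big\|\ \le\ K'\max_{s\le r_N}\Big(\sum_{n\in\tau^N_s}|\alpha_n|^{\underline p}\Big)^{1/\underline p}.$$

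I would then split on the behaviour of $(r_N)$. If $\sup_N r_N=\infty$: choosing one index from each of the $r_N$ pieces gives a set $B$ with $|B|=r_N$ and, by the right‑hand estimate, $\|\sum_{i\in B}x_i\|\le K'$; by democracy the fundamental function $\varphi(m):=\|\sum_{i=1}^m x_i\|$ then satisfies $\varphi(r_N)\le\Delta K'$, so $\varphi$ is bounded along an unbounded set, hence (as $\varphi(m)\le K\varphi(m')$ for $m\le m'$) bounded. Boundedness of $\varphi$ together with $K$‑unconditionality and normalization forces $(x_i)$ to be equivalent to the unit vector basis of $c_0$, i.e. $X\cong c_0$. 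But then $X^*\cong\ell_1$ would have a greedy basis, whereas $X^*=(\oplus\ell_{p'}^n)_{\ell_1}$ has none: by Theorem~\ref{T:3} when $1<p<\infty$ (so $1<p'<\infty$), and, when $p=1$ (so $p'=\infty$), because by \cite{BCLT} the space $(\oplus\ell_\infty^n)_{\ell_1}$ has, up to permutation, a unique unconditional basis, the canonical one, which is not democratic. This contradiction disposes of the first case.

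If instead $r_N\le r$ for all $N$, I would show $X$ has finite cotype, contradicting $c_0\hookrightarrow X$. Restricting the left‑hand estimate to coefficients supported in a single piece shows that each block $W^N_s:=\mathrm{span}(x_n:n\in\tau^N_s)$ carries a normalized $K$‑unconditional basis satisfying a lower $\ell_{\bar p}$‑estimate with constant $K'$; since $\bar p\ge2$, this makes $W^N_s$ of cotype $\bar p$ with a constant depending only on $K,K',\bar p$. Decomposing an arbitrary vector of $Z_N$ into its at most $r$ pieces and combining the cotype estimates of the $W^N_s$ with unconditionality, one gets that $Z_N$ itself is of cotype $\bar p$ with a constant depending only on $r,K,K',\bar p$, hence uniformly in $N$. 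Finally, since the $Z_N$ increase to a dense subspace of $X$ and $S_N\to\mathrm{id}_X$ strongly with $\sup_N\|S_N\|<\infty$, a routine approximation argument transfers the estimate to $X$, so $X$ has cotype $\bar p<\infty$; but $X\supseteq c_0$, which has no nontrivial cotype. This contradiction finishes the proof.

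The main obstacle is the second case. There one must (i) verify carefully that gluing at most $r$ uniformly‑cotype‑$\bar p$ blocks by the $\ell_\infty$‑type norm coming from Proposition~B keeps the cotype‑$\bar p$ constant bounded, and (ii) push this uniform cotype estimate from the increasing, uniformly complemented sequence $(Z_N)$ up to $X$ itself; both steps are elementary but are not one‑liners. The first case is comparatively soft, its only nontrivial input being that $(\oplus\ell_{p'}^n)_{\ell_1}$ is not isomorphic to $\ell_1$, which is exactly Theorem~\ref{T:3} together with the Bourgain--Casazza--Lindenstrauss--Tzafriri uniqueness theorem.
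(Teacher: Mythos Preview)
The paper's proof is a three-line argument that avoids Proposition~B entirely: since $X=(\oplus\ell_p^n)_{c_0}$ has separable dual it contains no copy of $\ell_1$, so any normalized unconditional basis $(x_n)$ is weakly null and hence has a subsequence equivalent to the $c_0$ basis; democracy then gives $\sup_{A}\big\|\sum_{i\in A}x_i\big\|<\infty$, and together with unconditionality this forces the whole basis to be equivalent to the $c_0$ basis, contradicting $X\not\cong c_0$. Your Case~1 eventually reaches the same conclusion $X\cong c_0$ (and then contradicts it by a needlessly indirect duality argument invoking Theorem~\ref{T:3} and \cite{BCLT}), but the route through Proposition~B is not needed.

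More importantly, your Case~2 has a genuine gap. The assertion that a normalized $K$-unconditional basis with a lower $\ell_{\bar p}$-estimate on its \emph{coefficients} has cotype $\bar p$ with constant depending only on $K,K',\bar p$ is false: a coefficient lower $\ell_{\bar p}$-estimate is strictly weaker than a lattice lower $\bar p$-estimate, and only the latter is tied to concavity and cotype. Concretely, on $\R^{n\times n}$ the $1$-unconditional norm
\[
\|a\|=\max\Big(\|a\|_{\bar p},\ \max_{1\le j\le n}\Big(\sum_{i=1}^n|a_{ij}|^{\underline p}\Big)^{1/\underline p}\Big)
\]
satisfies both the upper $\ell_{\underline p}$ and lower $\ell_{\bar p}$ estimates that Proposition~B provides (with constant $1$), yet for the disjoint vectors $y_j=\sum_{i=1}^n e_{ij}$ one has $\|y_j\|=n^{1/\underline p}$ and $\mathbb{E}\big\|\sum_j\varepsilon_j y_j\big\|=\big\|\sum_j y_j\big\|=\max(n^{2/\bar p},n^{1/\underline p})$, so the cotype-$\bar p$ constant is at least $n^{\min(1/\underline p-1/\bar p,\,1/\bar p)}\to\infty$ whenever $\underline p<\bar p$. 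Thus the blocks $W^N_s$ need not have uniformly bounded cotype constants, and your transfer of cotype to $X$ does not get off the ground. The case can be closed, but by the paper's idea rather than yours: when $r_N\le r$ the lower estimate already gives $\big\|\sum_{n=1}^N x_n\big\|\ge (K')^{-1}r^{-1/\bar p}N^{1/\bar p}\to\infty$, which contradicts the bounded fundamental function coming from the $c_0$-subsequence and democracy---and at that point Proposition~B has contributed nothing that the elementary argument did not already give.
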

\begin{proof} Assume that $(x_n)$ is a greedy basis of $(\oplus \ell_p^n)_{c_0}$. Since $(x_n)$  is democratic and must contain a subsequence which is
equivalent to the unit vector basis of $c_0$ it follows that for some constant $C\ge 1$ that,
$$ \Big\| \sum_{n\in A} x_n\Big\|\le  C ,\text{ for all finite  $A\subset\N$.}$$
This together with the unconditionality of $(x_n)$ implies that $(x_n)$ is equivalent to the unit basis of $c_0$, which is a contradiction
since $(\oplus \ell_p^n)_{c_0}$ is not isomorphic to $c_0$.
\end{proof}

We have now finished the proof of Theorem \ref{T:2} and  determined which spaces of the form $(\oplus_{N=1}^\infty\ell_p^N)_{\ell_q}$
have a greedy basis.  We now turn to the proof of Theorem \ref{T:2a}.

We  rely on the
concept of greedy permutations developed by Albiac and Wojtaszczyk
\cite{AW}, which we recall here. Let $M(x)$ denote the subset of the
support of $x$ consisting of the largest coordinates of $x$ in
absolute value. We will say that a one to one map
$\pi:\text{\rm supp}(x)\rightarrow\N$ is a greedy permutation of $x$ if
$\pi(j)=j$ for all $j\in \text{\rm supp} (x)\setminus M(x)$ and if $j\in M(x)$
then, either $\pi(j) = j$ or $\pi(j)\not\in \text{\rm supp}(x)$.

\begin{defn}A basic sequence $(e_n)$ is defined to have property $(A)$ if for any
$x\in\text{\rm span}(e_i)$ we have
$$\|\sum_{n\in \text{\rm supp}(x)}e^*_n(x)e_n\|=\|\sum_{n\in \text{\rm supp}(x)}\theta_{\pi(n)}e^*_n(x)e_{\pi(n)}\|
$$
for all greedy permutations $\pi$ of $x$ and all sequences of signs
$(\theta_k)$ with $\theta_{\pi(n)}=1$ if $\pi(n)=n$.
\end{defn}
We recall
that $(e_n)$ is called {\em $C$-suppression unconditional}, for some $C\ge 1$, if  for
 any  $(a_i)\subset c_{00}$ and any $A\subset \N$,
  $$\Big\|\sum_{i\in A} a_i e_i\Big\| \le C\Big\|\sum_{i\in\N} a_i e_i\Big\|.$$
\begin{thmC}\label{T:10}{\rm  \cite[Theorem 3.4]{AW}  }
A basic sequence $(e_n)$ is 1-greedy if and only if $(e_n)$ is
1-suppression unconditional and satisfies property $(A)$.
\end{thmC}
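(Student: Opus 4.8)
The plan is to prove both implications of the equivalence, handling the forward direction (``1-greedy $\Rightarrow$ 1-suppression unconditional and property $(A)$'') quickly from what is already available and reserving the work for the converse. Throughout write $P_Bx=\sum_{i\in B}e_i^*(x)e_i$ for the coordinate projection onto a finite $B$, let $B^c=\N\setminus B$, and recall $G_n(x)=P_{A_n(x)}x$. For 1-suppression unconditionality, note that Theorem~A with $C=1$ already shows a 1-greedy basis is 1-unconditional, i.e. $\|\sum\pm e_i^*(x)e_i\|\le\|x\|$; applying this with signs $+1$ on $B$ and $-1$ off $B$ and averaging gives $\|P_Bx\|=\big\|\tfrac12\big(x+\sum\theta_i e_i^*(x)e_i\big)\big\|\le\tfrac12(\|x\|+\|x\|)=\|x\|$, which is 1-suppression unconditionality.

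For property $(A)$ I would argue one coordinate at a time. If $i_0\in M(x)$ has $|e_{i_0}^*(x)|=\mu=\max_i|e_i^*(x)|$ and $j_0\notin\supp(x)$, set $w=x+a'e_{j_0}$ with $|a'|=\mu$ and arbitrary sign. Then $i_0$ and $j_0$ are both maximal coordinates of $w$, so deleting either is an admissible first greedy step; since in the presence of ties the 1-greedy inequality is required for every choice of largest coordinate, both $\|x\|=\|w-a'e_{j_0}\|\le\sigma_1(w)$ and $\|y\|=\|w-e_{i_0}^*(x)e_{i_0}\|\le\sigma_1(w)$ hold, where $y=(x-e_{i_0}^*(x)e_{i_0})+a'e_{j_0}$, while trivially $\sigma_1(w)\le\min(\|x\|,\|y\|)$. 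Hence $\|x\|=\|y\|$: one maximal coordinate may be relocated to a fresh index with any sign without changing the norm. Iterating (the maximal modulus $\mu$ is preserved at each stage) realizes an arbitrary greedy permutation with arbitrary signs on the moved coordinates, which is exactly property $(A)$.

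For the converse I would first reduce best approximation to a coordinate-deletion problem. Given $x$, $n$, and any $z$ with $|\supp(z)|\le n$, suppression gives $\|x-z\|\ge\|P_{\supp(x)}(x-z)\|$, so I may assume $B:=\supp(z)\subseteq\supp(x)$; and for fixed $B$, suppressing the coordinates of $x-z$ lying in $B$ yields $\|x-z\|\ge\|P_{B^c}x\|$, with equality when $z=P_Bx$. Since adjoining an index of $\supp(x)\setminus B$ to $B$ only lowers $\|P_{B^c}x\|$, it follows that $\sigma_n(x)=\min\{\|P_{B^c}x\|:B\subseteq\supp(x),\,|B|=n\}$. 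Thus everything reduces to showing that deleting the $n$ largest coordinates is optimal: $\|P_{A^c}x\|\le\|P_{B^c}x\|$ for every $n$-element $B\subseteq\supp(x)$, where $A=A_n(x)$.

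This last inequality is the heart of the matter, and the step I expect to be the main obstacle, since it is precisely where property $(A)$ and suppression must be combined. I would prove it by an exchange argument. If $B\ne A$, pick the maximal-modulus index $i^*\in A\setminus B$ and any $j\in B\setminus A$; then $|e_{i^*}^*(x)|\ge|e_j^*(x)|$, and a short argument (any surviving coordinate of strictly larger modulus would itself lie in $A\setminus B$, contradicting maximality of $i^*$) shows $i^*$ is a maximal coordinate of the residual $r=P_{B^c}x$. Writing $r=u+e_{i^*}^*(x)e_{i^*}$ with $j\notin\supp(u)$, property $(A)$ moves $i^*$ to the fresh index $j$ with sign equal to that of $e_j^*(x)$, giving $\|r\|=\|u+c'e_j\|$ where $|c'|=|e_{i^*}^*(x)|$. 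A single-coordinate monotonicity then finishes: writing $e_j^*(x)=tc'$ with $t\in[0,1]$ (possible since $|e_j^*(x)|\le|c'|$ and the signs agree), $u+e_j^*(x)e_j=(1-t)u+t(u+c'e_j)$, so $\|u+e_j^*(x)e_j\|\le(1-t)\|u\|+t\|u+c'e_j\|\le\|u+c'e_j\|=\|r\|$, the last bound using $\|u\|\le\|u+c'e_j\|$ from suppression. The left side is $\|P_{(B')^c}x\|$ for $B'=(B\setminus\{j\})\cup\{i^*\}$, so the swap does not increase the residual while strictly increasing $|A\cap B|$; finitely many swaps turn $B$ into $A$, yielding $\|P_{A^c}x\|\le\|P_{B^c}x\|$ and hence $\|x-G_n(x)\|\le\sigma_n(x)$. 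The delicate points throughout are guaranteeing that the relocated coordinate is genuinely of maximal modulus in the \emph{current} residual, so that property $(A)$ applies, and matching signs so that the convexity estimate lowers rather than raises the norm.
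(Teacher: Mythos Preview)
The paper does not prove Theorem~C at all: it is quoted verbatim from \cite[Theorem~3.4]{AW} and used as a black box in the proof of Theorem~\ref{T:2a}. So there is no ``paper's own proof'' to compare against.

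That said, your argument is essentially the Albiac--Wojtaszczyk proof and is correct. A couple of minor remarks. For the forward direction, invoking Theorem~A to get $1$-unconditionality and then averaging to obtain $1$-suppression unconditionality is a detour; the direct route is cleaner: given finite $B$ with $|B|=n$, choose $\lambda>\max_i|e_i^*(x)|$ and set $y=P_{B^c}x+\lambda\sum_{i\in B}e_i$, so that $A_n(y)=B$ and $\|P_{B^c}x\|=\|y-G_n(y)\|\le\sigma_n(y)\le\|y-(\lambda\sum_{i\in B}e_i-P_Bx)\|=\|x\|$. Your property~$(A)$ argument via the auxiliary vector $w=x+a'e_{j_0}$ and the tie at level $\mu$ is exactly right; the iteration is sound because each swap preserves the maximal modulus $\mu$ and the injectivity of $\pi$ together with $\pi(M(x))\cap\supp(x)=\varnothing$ keeps each successive target outside the current support.

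For the converse, your reduction $\sigma_n(x)=\min\{\|P_{B^c}x\|:B\subseteq\supp(x),\,|B|=n\}$ is correct (one small clarification: the first suppression step is really $\|x-z\|\ge\|P_{\supp(z)^c}(x-z)\|=\|P_{\supp(z)^c}x\|=\|x-P_{\supp(z)}x\|$, which already lets you replace $z$ by $P_{\supp(z)}x$; your phrasing via $P_{\supp(x)}$ works too but is slightly less direct). The exchange argument is the standard one and your verification that $i^*$ is genuinely maximal in the residual $P_{B^c}x$ is the key point: any $k\in\supp(x)\setminus B$ with larger modulus would lie in $A$ (since $A$ collects the $n$ largest) and hence in $A\setminus B$, contradicting the choice of $i^*$. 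The convexity--plus--suppression step to shrink $c'$ down to $e_j^*(x)$ is correct as written.
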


\begin{proof}[Proof of Theorem \ref{T:2a}]
We first consider the case that $1<p<\infty$. We show that if
$A\subset\N$ is any finite set, then $\|\sum_{i\in A} a_i
x_i\|=(\sum_{i\in A} |a_i|^p)^{1/p}$ for all $(a_i)\in\coo$. This is
trivial if $|A|=1$ as $(x_i)$ is normalized. We now assume that the
equality holds for $|A|\leq k$ for some $k\geq1$.  Let
$(a_i)\in\coo$ and $A\subset\N$ such that $|A|=k+1$. Choose $N\in A$
such that $|a_N|=\max_{i\in A}|a_i|$.  We define
$\pi_j:A\rightarrow\N$ by $\pi_j(N)=j$ and $\pi_j(n)=n$ for all
$n\neq N$.  The map $\pi_j$ is a greedy permutation whenever
$j\not\in A$, and hence by Theorem~C we have the following
equalities.
\begin{align*}
\Big\|\sum_{i\in A}a_i x_i\Big\|&=\Big\|\sum_{i\in A,i\neq N}a_i x_i +a_N
x_j\Big\|\quad\textrm{for all }j\not\in A\\
&=\lim_{j\rightarrow\infty}\Big\|\sum_{i\in A,i\neq N}a_i x_i +a_N
x_j\Big\|\\
&=\Big(\Big\|\sum_{i\in A,i\neq N}a_i x_i\Big\|^p +|a_N|^p\Big)^{1/p},\quad\textrm{as }(x_j)\textrm{ is normalized and weakly null,}\\
&=\Big(\sum_{i\in A}|a_i|^p\Big)^{1/p},\quad\textrm{by the induction hypothesis}.\\
\end{align*}
This finishes the proof of the induction step, and, thus, the proof of our claim.

The case $p=\infty$, in which case we consider the $c_0$-sum of the $E_n$, works similarly,
as every normalized unconditional sequence in
$(\sum E_n)_{c_0}$ must be weakly null.

We now consider the $\ell_1$
case. Let $(x_i)$ be a 1-greedy basis for $(\sum E_n)_{\ell_1}$.  If
$(x_i)$ is $w^*$-null with respect to the $w^*$ topology given by
$(\sum E^*_n)_{\ell_\infty}$, then the proof that $(x_i)$ is 1-equivalent to
the unit vector basis for $\ell_1$ is the same as the previous case
$1<p<\infty$. If $(x_i)$ is not $w^*$-null, then $(x_i)$ has a
subsequence $(x_{k_i})$ which converges $w^*$ to some non-zero
$x\in(\sum E_n)_{\ell_1}$. Hence $(x_{k_i}-x)$ is $w^*$-null.  This
implies that
$\lim_{i\rightarrow\infty}\|y+x_{k_i}-x\|=\|y\|+\lim_{i\rightarrow\infty}\|x_{k_i}-x\|$
for all $y\in(\sum E_n)_{\ell_1}$. We use this to achieve the
following equalities.
\begin{align*}
\lim_{n\rightarrow\infty}\lim_{i\rightarrow\infty}\|x_{k_n}-x_{k_i}\|&=\lim_{n\rightarrow\infty}\lim_{i\rightarrow\infty}\|(x_{k_n}-x)-(x_{k_i}-x)\|\\
&=\lim_{n\rightarrow\infty}\|x_{k_n}-x\|+\lim_{i\rightarrow\infty}\|x_{k_i}-x\|=2\lim_{i\rightarrow\infty}\|x_{k_i}-x\|.
\end{align*}
Furthermore,
\begin{align*}
\lim_{n\rightarrow\infty}\lim_{i\rightarrow\infty}\|x_{k_n}+x_{k_i}\|&=\lim_{n\rightarrow\infty}\lim_{i\rightarrow\infty}\|(x_{k_n}-x)+(x_{k_i}-x)+2x\|\\
&=\lim_{n\rightarrow\infty}\|x_{k_n}-x+2x\|+\lim_{i\rightarrow\infty}\|x_{k_i}-x\|\\
&=2\|x\|+\lim_{n\rightarrow\infty}\|x_{k_n}-x\|+\lim_{i\rightarrow\infty}\|x_{k_i}-x\|=2\|x\|+2\lim_{i\rightarrow\infty}\|x_{k_i}-x\|.
\end{align*}
As $(x_i)$ is 1-greedy, we must have, by Theorem~C, that
$\lim_{n\rightarrow\infty}\lim_{i\rightarrow\infty}\|x_{k_n}-x_{k_i}\|=\lim_{n\rightarrow\infty}\lim_{i\rightarrow\infty}\|x_{k_n}+x_{k_i}\|$.
This however implies that $\|x\|=0$, which is a contradiction with
our assumption that $x$ is non-zero.
\end{proof}


\begin{thebibliography}{WWWW}

\bibitem[AW]{AW}
F. Albiac and P. Wojtaszczyk, {\it Characterization of 1-greedy bases}, J. Approx. Theory, {\bf 138}  no.1 (2006), 65-86.


\bibitem[BCLT]{BCLT}
J. Bourgain, P.G. Casazza, J. Lindenstrauss, and L. Tzafriri, {\em
Banach spaces with a unique unconditional basis, up to permutation},
Memoirs Amer. Math. Soc. No. 322, 1985.
\bibitem[BP]{BP} A.~Benedek and R.~Panzone,{\em
The space $L_{p}$, with mixed norm.}
Duke Math. J. {\bf 28} (1961) 301--324.

\bibitem[C]{C} A.~P.~Calder\'on, {\em Intermediate spaces, and Interpolation,}  {Studia Math.} {\bf 24} (1964), 113--190.

\bibitem[DHK]{DHK}  S. J. Dilworth, Mark Hoffmann and Denka Kutzarova {\it Non-equivalent greedy and almost-greedy bases in $\ell_p$}, J. Funct. Spaces and Appl., {\bf 318} (2006), 692-706.

\bibitem[DKK]{DKK} S. J. Dilworth, N. J. Kalton and Denka Kutzarova, {\it On the existence
of almost greedy bases in Banach spaces},
Studia Math. (Pe\l czy\'nski Anniversary Volume), Studia Math. {\bf 159} (2003), 67--101.

\bibitem[DKKT]{DKKT} S. J. Dilworth, N. J. Kalton, Denka Kutzarova, and V. N. Temlyakov,
{\it The Thresholding Greedy Algorithm, Greedy Bases, and
Duality}, Constr. Approx. {\bf 19} (2003), no. 4, 575-595.

\bibitem[EW]{EW}  I. S. Edelstein and P. Wojtaszczyk, {\it On projections and unconditional bases in direct sums of Banach spaces}, Studia Math. {\bf 56} (1976), no. 3, 263--276.

\bibitem[G]{G} D.J.H. Garling, {\it Inequalities: a journey into linear
analysis} Cambridge University Press, 2007.

\bibitem[GH]{GH} Gustavo Garrig\'os and Eugenio Hern\'andez, {\it Sharp Jackson and
Bernstein Inequalities for $N$-Term Approximation  in Sequence
Spaces with Applications}, Indiana Univ. Math. J. {\bf 53} (2004),
1739--1762.

\bibitem[HL]{HL} G. H. Hardy and J. E. Littlewood, {\it A maximal theorem with function-theoretic applications}, Acta Math. {\bf 54} (1930), 81--116.

\bibitem[K]{K} A. Kamont, {\it General Haar systems and greedy approximation}, Studia Math. {\bf 145} (2001), 165--184.

\bibitem[KT]{KT} S.V. Konyagin and V.N. Temlyakov, {\it A remark on greedy approximation in Banach
spaces}, East J.  Approx., {\bf 5} no.3 (1999), 365-379.

\bibitem[LT]{LT}
J. Lindenstrauss and L. Tzafriri, {\it Classical Banach Spaces
I: Sequence spaces},  Springer-Verlag, Berlin-Heidelberg-New York, 1977.

\bibitem[Pe]{Pe}  A.~Pe\l czy\'nski, {\it Projections in certain Banach spaces,} Studia Math.  {\bf 19} (1960), 209 -- 228.

\bibitem[Pi]{Pi} A.~Pietsch, {\it  Eigenvalues and $s$-numbers.} Cambridge Studies in Advanced Mathematics, 13. Cambridge University Press, Cambridge, 1987.

\bibitem[R]{R} S. Ropela, {\it Spline bases in Besov spaces}, Bull. Acad. Pol. Sci., Ser. Sci. Math. Astron. Phys., {\bf 24}  (1976), 319- 325.




\bibitem[T1]{T1} V. N. Temlyakov,  {\it The best $m$-term approximation and greedy algorithms},
Adv. Comput. Math. {\bf 8}  (1998), 249-265.


\bibitem[T2]{T2} B. S. Tsirelson, {\it Not every Banach space contains $\ell_p$ or $c_0$}, Functional Anal. Appl. {\bf 8} (1974), 138--141.

\bibitem[W1]{W1} P. Wojtaszczyk, {\it Greedy algorithm for general biorthogonal
systems}, J. Approx. Theory {\bf 107} (2000),
293-314.

\bibitem[W2]{W2} P. Wojtaszczyk, {\it Greedy type bases in Banach spaces}, Constructive theory of functions, 136--155, DARBA, Sofia, 2003.

\bibitem[W3]{W3} P. Wojtaszczyk, {\it Greediness of the Haar system in rearrangement invariant spaces}. Approximation and probability, 385--395, Banach Center Publ., 72, Polish Acad. Sci., Warsaw, 2006.















\end{thebibliography}
\end{document}